\documentclass[11pt,twoside]{article}
\usepackage{amsmath, amsthm, amscd, amsfonts, amssymb, graphicx, color}
\usepackage[bookmarksnumbered, colorlinks]{hyperref} \usepackage{float}
\usepackage{lipsum}
\usepackage{afterpage}
\usepackage[labelfont=bf]{caption}
\usepackage[nottoc,notlof,notlot]{tocbibind} 

\usepackage{lipsum}
\usepackage{fancyhdr}
\pagestyle{fancy}
\fancyhf{}

\fancyhead[LE,RO]{\thepage}
\thispagestyle{empty}

\fancyhead[CE]{Tanweer Jalal , Ishfaq Ahmad Malik} 
\fancyhead[CO]{Measures of Noncompactness in $\left(\bar{N}_{\Delta^{-}}^{q}\right)$ Summable Difference Sequence Spaces}

\textheight 17.5cm%
\textwidth  12cm %
\topmargin   8mm  %
\oddsidemargin   20mm   %
\evensidemargin   20mm   %
\footskip=24pt     %

\newtheorem{theorem}{Theorem}[section]
\newtheorem{lemma}[theorem]{Lemma}
\newtheorem{proposition}[theorem]{Proposition}
\newtheorem{corollary}[theorem]{Corollary}
\theoremstyle{definition}

\renewenvironment{proof}{{\bfseries \noindent Proof.}}{~~~~$\square$}
\makeatletter
\def\th@newremark{\th@remark\thm@headfont{\bfseries}}
\makeatletter



\begin{document}
\begin{center}

{\Large \bf 
Measures of Noncompactness in $\left(\bar{N}_{\Delta^{-}}^{q}\right)$ Summable Difference Sequence Spaces} 

{\bf  Tanweer Jalal $^*$\let\thefootnote\relax\footnote{$^*$Corresponding Author}}\vspace*{-2mm}\\
\vspace{2mm} {\small  National Institute of Technology, Srinagar} \vspace{2mm}

{\bf Ishfaq Ahmad Malik}\vspace*{-2mm}\\
\vspace{2mm} {\small   National Institute of Technology, Srinagar} \vspace{2mm}

\end{center}

\vspace{4mm}

{\footnotesize
\begin{quotation}
{\noindent \bf Abstract.} In the given paper we first introduce $\bar{N}_{\Delta^{-}}^{q}$ summable difference sequence spaces and prove some properties of these spaces. We then obtain the necessary and sufficient conditions for infinite matrices $A$ to map these sequence spaces on the spaces $c_0, c$ and $\ell_\infty$, the Hausdorff measure of noncompactness is then used to obtain the necessary and sufficient conditions for the compactness of the linear operators defined on these spaces.
\end{quotation}
\begin{quotation}

\noindent{\bf AMS Subject Classification:} 40H05, 46A45, 47B07

\noindent{\bf Keywords and Phrases:} Difference sequence space, BK spaces, matrix transformations, Measures of noncompactness
\end{quotation}}

\section{Introduction and Preliminaries}
We write $w$ for the set of all complex sequences $x =(x_k)_{k=0}^{\infty}$ and $\phi, c_0, c \text{ and } \ell_{\infty}$ 
for the sets of all finite, convergent sequences and sequences convergent to zero, and bounded respectively.   
The sequence $e$ is given by  $e=(1 ,1,1,\ldots)$ and $e^{(n)}$ is the sequence with 1 as only nonzero term at the $n$th place for each $n\in \mathbb{N}$, where $\mathbb{N}=\{0,1,2,\ldots\}$. Further by $cs$ and $\ell_1$ we denote the spaces of all sequences whose series  is convergent and absolutely convergent respectively. \\
The $\beta-$dual of a subset $X$ of $w$ is defined by 
$$X^{\beta}=\left\{a\in w:ax=(a_kx_k)\in cs~~\text{for all}~x=(x_k)\in X \right\}$$
If $A$ is an infinite matrix with complex entries $a_{nk}~$  $n,k\in\mathbb{N}$, we write $A_n$ for the sequence in the $n$th row of $A$, $A_n=( a_{nk})_{k=0}^{\infty}~n\in \mathbb{N}$ . The $A-$transform of any $x=(x_k)\in w$ is given by $Ax=\left(A_n(x)\right)_{k=0}^{\infty}$, where 
$$ A_n(x)=\sum_{k=0}^{\infty} a_{nk}x_k~~~~~~~~~n\in \mathbb{N}$$
the series on right must converge for each $n\in \mathbb{N}$.\\
If $X $ and $Y$ are subsets of $w$, we denote by $(X,Y)$, the class of all infinite matrices  that map $X$ into $Y$. So $A\in (X,Y)$ if and only if $A_n\in X^{\beta} ~,~n=0,1,2,\ldots$ and $Ax\in Y$ for all $x\in X$. The matrix domain of an infinite matrix $A$ in $X$ is defined by 
$$X_A=\left\{x\in w:Ax\in X\right\}$$
If $X$ and $Y$ are Banach Spaces, then by $\mathcal{B}(X,Y)$ we denote the set of all bounded (continuous) linear operators $L:X\rightarrow Y$ , which is itself a Banach space with operator norm $\|L\|=\sup_{x}\left \{\|L(x)\|_Y:\|x\|=1 \right\}$ for all $L\in \mathcal{B}(X,Y).$ The linear operator $L:X\rightarrow Y$ is said to be compact if the domain of $L$ is all of $X$ and every bounded sequence $(x_n)\in X$ , the sequence $\left(L(x_n)\right)$ has a sub-sequence which converges in $Y$. The operator $L\in \mathcal{B}(X,Y)$ is said to be of finite rank if $\dim R(L)<\infty$, where $R(L)$ denotes the range space of $L$. A finite rank operator is clearly compact.\\
The concept of difference sequence spaces was first introduced by Kizmaz \cite{kiz81} and later several authors studeid new sequence spaces defined by using difference operators like Mursaleen and Nouman \cite{mur10} and many more.\\
In the past, several authors studied matrix transformations on sequence spaces that are the matrix domains of the difference operator, or of the matrices of the classical methods of summability in spaces such as $\ell_p$, $c_0$, $c$,$\ell_\infty$ or others. For instance, some matrix domains of the difference operator were studied in (\cite{kiz81},\cite{sir92} ), of the Riesz matrices in \cite{alt02}, and so on.
In this paper, we first define a new summable difference sequence space as the matrix
domains $X_T$ of arbitrary triangles $\bar{N}_q$ and $\Delta^-$ and obtain it basis, $\beta$ dual of the new sequence spaces. We then find out the necessary and sufficient condition for the exists of matrix transformations and finally obtain the results related to the compactness of the linear operators on these new sequence spaces.
\section{$\bar{N}_{\Delta^{-}}^{q}$ Summable Difference Sequence Spaces}
Define the difference operator as 
\begin{align}\label{Dop}
\Delta^{-}x_k=x_{k-1}-x_{k}~~,k=0,1,2,\ldots~~\text{ where } x_{-1}=0
\end{align}
The $\Delta^{-}=\left(\delta_{nk}\right)_{n,k=0}^{\infty}$ is a triangular matrix written as  
$$\delta_{nk}=\left\{\begin{matrix}
-1& k= n\\
1& k=n-1 \\
0&k>n
\end{matrix}\right. $$
The inverse of this matrix is $S=(s_{nk})$ given as 
$$s_{nk}=\left\{\begin{matrix}
-1& 0\leq k\leq n\\
0&k>n
\end{matrix}\right. $$
Let $(q_k)_{k=0}^{\infty}$ be positive sequences and $(Q_n)_{n=0}^{\infty}$ be the sequence defined as $Q_n=\sum_{i=0}^{n} q_{i} $. The $(\bar{N},q)$ transform of of the sequence $(x_k)_{k=0}^{\infty}$  is defined as 
$$t_n={1\over Q_n}\sum_{i=0}^{\infty}q_i x_i$$
The matrix $\bar{N}_{q}$ for this transformation can be written as   
$$(\bar{N}_{q})_{nk}=\left\{\begin{matrix}
{q_{k}\over Q_{n}}& 0\leq k\leq n\\
0& k>n
\end{matrix}\right. $$
The inverse of this matrix is \cite{M1-b}
$$(\bar{N}_{q})_{nk}^{-1}=\left\{\begin{matrix}
(-1)^{n-k}{Q_{k}\over q_{n}}& n-1\leq k\leq n\\
0& 0\leq k\leq n-2, k>n
\end{matrix}\right. $$
We define the $\bar{N}_{\Delta^{-}}^{q}$ summable difference sequence spaces as   
\begin{align*}
&(\bar{N}_{\Delta^{-}}^{q})_0&=(c_0,\Delta^{-})_{\bar{N}_{q}}&=\left\{x\in w: 
 \bar{N}_{q}\Delta^{-}x=\left({1\over Q_n}\sum_{k=0}^{n}q_k \Delta^{-}x_k \right)_{n=0}^{\infty} \in c_0 \right\}\\
 &(\bar{N}_{\Delta^{-}}^{q})&=(c,\Delta^{-})_{\bar{N}_{q}}&=\left\{x\in w: 
 \bar{N}_{q}\Delta^{-}x =\left({1\over Q_n}\sum_{k=0}^{n}q_k \Delta^{-}x_k \right)_{n=0}^{\infty} \in c \right\} \\
 &(\bar{N}_{\Delta^{-}}^{q})_{\infty}&=(\ell_{\infty},\Delta^{-})_{\bar{N}_{q}}&=\left\{x\in w: \bar{N}_{q}\Delta^{-}x=\left({1\over Q_n}\sum_{k=0}^{n}q_k \Delta^{-}x_k \right)_{n=0}^{\infty} \in \ell_{\infty} \right\}
\end{align*}  

\parindent=0mm\vspace{0.00in}
For any sequence $x=(x_k)_{k=0}^{\infty} $, define $\tau=\tau(x)$ as the sequence with $n$th term given by 
\begin{align}\label{tau}
\tau_n=(\bar{N}_{\Delta^{-}}^{q})_n(x)={1\over Q_n}\sum_{k=0}^{n} q_k \Delta^{-}x_k~~~~~~~~~(n=0,1,2,\ldots)
\end{align}
 
\parindent=0mm\vspace{0.00in}
For any two sequences $x$ and $y$, the product $xy = (x_ky_k)_{k=0}^{\infty}$.

\subsection{Basis for the new sequence spaces}

\begin{proposition}\label{P1} 
[ \cite{wilansky2000summability}, 1.4.8, p.9]\\
Every triangle $T $ has a unique inverse $S=\left(s_{nk}\right)_{n,k=0}^{\infty}$ which is also a triangle, and $x=T(S(x))=S(T(x))$ for all $x\in w$.
\end{proposition}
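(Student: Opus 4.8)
The plan is to build the inverse $S$ directly from the triangular structure of $T=(t_{nk})$, exploiting the defining feature of a triangle that its diagonal entries satisfy $t_{nn}\neq 0$. First I would fix an arbitrary $x=(x_k)\in w$ and read the equation $T(y)=x$ row by row: the $n$th row reads $\sum_{k=0}^{n}t_{nk}y_k=x_n$, a genuinely finite sum because $t_{nk}=0$ for $k>n$. Since $t_{nn}\neq 0$, this can be solved for $y_n$ in terms of $y_0,\ldots,y_{n-1}$ and $x_n$, namely $y_n=\frac{1}{t_{nn}}\big(x_n-\sum_{k=0}^{n-1}t_{nk}y_k\big)$. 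Starting from $y_0=x_0/t_{00}$ and proceeding by induction, this recursion determines each $y_n$ uniquely and shows that $y_n$ is a linear combination of $x_0,\ldots,x_n$ alone.

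The next step is to record those linear coefficients as a matrix: writing $y_n=\sum_{k=0}^{n}s_{nk}x_k$ defines entries $s_{nk}$ with $s_{nk}=0$ for $k>n$, so $S=(s_{nk})$ is lower triangular; unwinding the recursion at the diagonal gives $s_{nn}=1/t_{nn}\neq 0$, so $S$ is itself a triangle. By construction $y=S(x)$ is exactly the solution of $T(y)=x$, which yields $T(S(x))=x$ for every $x\in w$; equivalently, at the level of matrices, $TS=I$, where every entry of the product is a finite sum thanks to triangularity.

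It then remains to upgrade this one-sided statement to a genuine two-sided inverse and to verify uniqueness. Here I would work purely algebraically in the algebra of lower triangular infinite matrices, in which products are well defined entrywise by finite sums and multiplication is associative. Applying the construction of the previous step to the triangle $S$ produces a triangle $R$ with $SR=I$; then $T=T(SR)=(TS)R=R$, so in fact $ST=SR=I$ as well. Consequently $S(T(x))=x$ for all $x\in w$, completing $x=T(S(x))=S(T(x))$. Uniqueness is immediate: if $S'$ is any triangle with $TS'=I$, then $S'=(ST)S'=S(TS')=S$.

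I expect the only real subtlety to be bookkeeping rather than analysis: because $T$ is a triangle, every series that appears collapses to a finite sum, so there are no convergence issues to check and no topology on $w$ to invoke. The one point that must be stated carefully is that the entrywise matrix product of two triangles agrees with composition of the induced maps on $w$, i.e. $(T(S(x)))_n=\sum_j\big(\sum_k t_{nk}s_{kj}\big)x_j$, so that the matrix identities $TS=I$ and $ST=I$ translate faithfully into the functional identities $T(S(x))=x$ and $S(T(x))=x$.
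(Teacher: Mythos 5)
The paper never proves this proposition: it is quoted as a known result from Wilansky [\cite{wilansky2000summability}, 1.4.8] and used as a tool, so there is no internal proof to compare yours against. Judged on its own, your argument is correct and complete, and it is essentially the standard proof from the cited reference: forward substitution using $t_{nn}\neq 0$ solves $T(y)=x$ uniquely, the linearity of the recursion packages the solution as $y=S(x)$ for a triangle $S$ with $s_{nn}=1/t_{nn}$, giving $TS=I$; the upgrade to $ST=I$ (by building a right inverse $R$ of $S$ and using associativity to conclude $R=T$) and the uniqueness argument $S'=(ST)S'=S(TS')=S$ are both legitimate because products of lower triangular matrices involve only finite sums, so the algebra of triangles is genuinely associative. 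You also correctly identified the one point that must be said aloud: for row-finite matrices the entrywise matrix product agrees with composition of the induced maps on $w$, which is what lets the matrix identities $TS=ST=I$ transfer to the functional identities $T(S(x))=S(T(x))=x$.
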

\begin{proposition}\label{P2}
[\cite{Al-Em03}, Theorem 2.3] If $\left(b^{(n)}\right)_{n=0}^{\infty}$ is a basis of the linear metric space $(X,d)$, then $\left(S(b^{(n)})\right)_{n=0}^{\infty}$ is a basis of $Z=X_T$ with metric $d_T$ defined by $d_T(z,\bar{z})=d(T(z),T(\bar{z}))$ for all $z,\bar{z}\in Z$. Where $S $ is the inverse of the matrix $T$.
\end{proposition}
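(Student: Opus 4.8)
The plan is to exploit the fact that $T$ is an isometric linear bijection from $(Z,d_T)=(X_T,d_T)$ onto $(X,d)$, with inverse $S$. First I would verify this. Linearity of $T$ is clear since it is a matrix transformation, and $T$ sends $Z$ into $X$ by the very definition $Z=X_T=\{x\in w:Tx\in X\}$. By Proposition \ref{P1}, $T(S(x))=S(T(x))=x$ for every $x\in w$; in particular, if $x\in X$ then $T(S(x))=x\in X$, so $S(x)\in Z$, and the two identities show that $T:Z\to X$ is a bijection with inverse $S:X\to Z$. Finally, $d_T(z,\bar z)=d(Tz,T\bar z)$ holds by definition, so $T$ is an isometry, and the companion computation $d_T(S(x),S(\bar x))=d(TS(x),TS(\bar x))=d(x,\bar x)$ shows $S$ is an isometry as well. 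In particular both $T$ and $S$ are continuous.

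Next I would transport the basis. Fix $z\in Z$ and put $x=Tz\in X$. Since $\left(b^{(n)}\right)_{n=0}^{\infty}$ is a basis of $X$, there is a unique scalar sequence $(\lambda_n)$ with $x=\sum_{n=0}^{\infty}\lambda_n b^{(n)}$, the convergence being in $d$. Using $T(S(b^{(n)}))=b^{(n)}$, linearity of $T$, and the definition of $d_T$, I would estimate
\begin{align*}
d_T\!\left(z,\ \sum_{n=0}^{m}\lambda_n S(b^{(n)})\right) &= d\!\left(Tz,\ T\sum_{n=0}^{m}\lambda_n S(b^{(n)})\right)\\
&= d\!\left(x,\ \sum_{n=0}^{m}\lambda_n b^{(n)}\right)\xrightarrow[m\to\infty]{}0,
\end{align*}
which shows $z=\sum_{n=0}^{\infty}\lambda_n S(b^{(n)})$ in $(Z,d_T)$. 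Conceptually this is just the statement that the continuous linear map $S$ commutes with the convergent series defining $x$.

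For uniqueness, suppose also $z=\sum_{n=0}^{\infty}\mu_n S(b^{(n)})$ in $(Z,d_T)$. Applying the continuous linear map $T$ and using $T(S(b^{(n)}))=b^{(n)}$ gives $x=Tz=\sum_{n=0}^{\infty}\mu_n b^{(n)}$ in $(X,d)$, whence $\mu_n=\lambda_n$ for all $n$ by uniqueness of the expansion of $x$ in the basis $\left(b^{(n)}\right)$. This establishes that $\left(S(b^{(n)})\right)_{n=0}^{\infty}$ is a basis of $Z$. The argument is short precisely because the isometry built into $d_T$ does all the work; the only point that genuinely requires care is the interchange of $S$ (respectively $T$) with the infinite series, and this is exactly where continuity, guaranteed by the isometry, enters. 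I expect no serious obstacle beyond making that continuity step explicit and checking that the coefficients are literally the same on both sides.
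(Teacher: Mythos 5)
Your argument is correct, but note that the paper itself offers no proof to compare against: Proposition \ref{P2} is quoted verbatim from the reference [\cite{Al-Em03}, Theorem 2.3] and is used as a black box. What you have written is essentially the standard proof of that cited theorem: observe that $T$ is a linear bijection of $Z=X_T$ onto $X$ with inverse $S$ (both facts coming from Proposition \ref{P1}), that $d_T$ makes $T$ an isometry by construction, and then transport the basis expansion of $x=Tz$ back through $S$, using continuity of the isometries to pass $T$ and $S$ through the infinite series in both the existence and uniqueness steps. Two small points you could make explicit for completeness: first, that $d_T$ is genuinely a metric on $Z$ (this needs injectivity of $T$ on $Z$, which again follows from $S(T(z))=z$ in Proposition \ref{P1}); second, that linearity of $T$ on finite linear combinations is unproblematic precisely because $T$ is a triangle, so each row involves only finitely many nonzero entries and no convergence question arises. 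Neither gap is serious, and your proof is a sound, self-contained substitute for the citation.
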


\parindent=0mm\vspace{0.00in}    
It is obvious that $(c_0,\Delta^{-})_{\bar{N}_{q}}=(c_0)_{\bar{N}_{q}\cdot \Delta^{-}}$, So the basis for new spaces are given by $\left(\bar{N}_{q}\cdot\Delta^{-}\right)^{-1}\left(e^{(n)}\right)=\left(\Delta^{-}\right)^{-1}\cdot\left(\bar{N}_{q}\right)^{-1} \left(e^{(n)}\right)$we have 
\begin{theorem}\label{T}
Let $\lambda_k=\left(\left(\bar{N}_{\Delta^{-}}^{q}\right)x\right)_k$ for all $k\in \mathbb{N}$. Define the sequence $s^{(k)}=\left\{s_{n}^{(k)}\right\}_{n\in \mathbb{N}}$ of the elements of $(c_0,\Delta^{-})_{\bar{N}_{q}}$ as 
 $$s_{n}^{k}=\left\{\begin{matrix}
\sum_{j=1}^{k}Q_j\left({1\over q_{j+1}}-{1\over q_j}\right)&0\leq k< n\\
-{Q_k\over q_k}& k=n \\
0& k>n \end{matrix}\right.~~~~,~~~~s_{n}^{(-1)}= \sum_{k=0}^{n}\left[
\sum_{j=1}^{k-1}Q_j\left({1\over q_{j+1}}-{1\over q_j}\right)+{Q_k\over q_k}\right]$$
for every fixed $k\in \mathbb{N}$. Then 
\begin{enumerate}
\item[i)] The sequence $\left\{s^{(k)}\right\}_{k\in \mathbb{N}}$ is a basis for the space $(c_0,\Delta^{-})_{\bar{N}_{q}}$ and any $x\in (c_0,\Delta^{-})_{\bar{N}_{q}}$ can be uniquely represented in the form 
$$x=\sum_{k}\lambda_ks^{(k)} $$.
\item[ii)] The set $\left\{e,s^{(k)}\right\}$ is a basis for the spaces $(c,\Delta^{-})_{\bar{N}_{q}}$  and any $x\in (c,\Delta^{-})_{\bar{N}_{q}}$ has a unique representation in the form 
$$x=ls_{n}^{(-1)}+\sum_{k}(\lambda_k-l)s^{(k)} $$
where for all $k\in \mathbb{N}$, $l=\lim_{k\rightarrow\infty}\left(\left(\bar{N}_{\Delta^{-}}^{q}\right)x\right)_k $. 
\end{enumerate} 
\end{theorem}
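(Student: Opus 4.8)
The plan is to realise each space as a matrix domain of the single triangle $T=\bar{N}_{q}\,\Delta^{-}$ and then to transport the familiar Schauder bases of $c_0$ and $c$ through the inverse triangle, exactly as licensed by Propositions \ref{P1} and \ref{P2}. By Proposition \ref{P1} the product of the two triangles $\bar{N}_{q}$ and $\Delta^{-}$ is again a triangle $T$, with unique (triangle) inverse
\[
S=T^{-1}=\left(\Delta^{-}\right)^{-1}\left(\bar{N}_{q}\right)^{-1}.
\]
Since $(c_0,\Delta^{-})_{\bar{N}_{q}}=(c_0)_T$ and $(c,\Delta^{-})_{\bar{N}_{q}}=(c)_T$, and since $\{e^{(k)}\}_{k\in\mathbb{N}}$ is the canonical basis of $c_0$ while $\{e,e^{(k)}\}_{k\in\mathbb{N}}$ is the canonical basis of $c$, Proposition \ref{P2} tells us that the images $\{S(e^{(k)})\}$ and $\{S(e),S(e^{(k)})\}$ are bases of the respective domains with respect to the metric $d_T$. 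Thus the whole theorem reduces to two tasks: computing these images in closed form, and reading off the coefficient functionals.

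For part (i) I would first evaluate $s^{(k)}:=S(e^{(k)})$, which is nothing but the $k$-th column of $S$. It is cleanest to apply the two inverse factors in turn: $(\bar{N}_{q})^{-1}e^{(k)}$ is supported only on the indices $k,k+1$ (the single nonzero band of $(\bar{N}_{q})^{-1}$), and left-multiplication by the full lower-triangular $(\Delta^{-})^{-1}$, whose entries are all $-1$ on and below the diagonal, turns this two-term vector into the stated piecewise expression for $s_n^{(k)}$: the value stabilises for $n>k$, equals $-Q_k/q_k$ at $n=k$, and vanishes for $n>k$... that is, above. For the expansion, write the arbitrary $y=Tx\in c_0$ in its canonical form $y=\sum_k y_k e^{(k)}$; applying the coordinatewise-continuous triangular map $S$ term by term gives $x=S(y)=\sum_k y_k\,S(e^{(k)})=\sum_k \lambda_k s^{(k)}$, where $\lambda_k=y_k=(Tx)_k=\big((\bar{N}_{\Delta^{-}}^{q})x\big)_k$ by \eqref{tau}. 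Uniqueness is inherited from the uniqueness of the $c_0$-expansion of $y$ together with the bijectivity of $T$.

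For part (ii) I would use the standard expansion of a convergent sequence, $y=l\,e+\sum_k (y_k-l)e^{(k)}$ with $l=\lim_k y_k$, which converges in the $BK$-topology of $c$. Setting $s^{(-1)}:=S(e)$ and again applying $S$ term by term yields $x=l\,s^{(-1)}+\sum_k (\lambda_k-l)s^{(k)}$ with $l=\lim_k\big((\bar{N}_{\Delta^{-}}^{q})x\big)_k$; the closed form of $s^{(-1)}$ comes from summing the rows of $S$, i.e.\ $s_n^{(-1)}=\sum_{k}S_{nk}$, which produces the displayed double-sum formula after the telescoping induced by the band structure of $(\bar{N}_{q})^{-1}$. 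The basis property and uniqueness then follow directly from Proposition \ref{P2} applied to the basis $\{e,e^{(k)}\}$ of $c$.

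The routine but delicate part is the first task above: carrying out the product $(\Delta^{-})^{-1}(\bar{N}_{q})^{-1}$ carefully enough to match the piecewise formula for $s^{(k)}$ and the summed formula for $s^{(-1)}$, tracking the telescoping at each step. The genuine conceptual point to justify is that $S$ may be applied term by term to these infinite expansions: here one uses that $S$ is a triangle, so each coordinate $x_n=(Sy)_n=\sum_{k\le n}S_{nk}y_k$ is a finite sum, together with the convergence in the domain metric $d_T$ guaranteed by Proposition \ref{P2}. The constant-term bookkeeping in (ii) --- isolating the limit $l$ and verifying $S(e)=s^{(-1)}$ --- is the main place where a sign or indexing error could creep in, and is where I would concentrate the checking.
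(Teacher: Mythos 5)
Your proposal is correct and follows essentially the same route as the paper: both realise the spaces as matrix domains of the triangle $T=\bar{N}_q\cdot\Delta^{-}$, compute $T^{-1}=\left(\Delta^{-}\right)^{-1}\left(\bar{N}_q\right)^{-1}$, and invoke Proposition \ref{P2} to transport the standard bases of $c_0$ and $c$; your write-up is in fact more complete, since the paper dismisses parts i) and ii) as ``obvious to follow.'' One remark: carrying out your column computation gives $s_n^{(k)}=Q_k\left({1\over q_{k+1}}-{1\over q_k}\right)$ for all $n>k$ (the single band entry, agreeing with the inverse displayed in the paper's own proof) rather than the cumulative sum $\sum_{j=1}^{k}Q_j\left({1\over q_{j+1}}-{1\over q_j}\right)$ appearing in the theorem statement, so the discrepancy you flagged as needing careful checking is a typo in the paper's statement, not a gap in your argument.
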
 
\begin{proof}
Since $(X,\Delta^{-})_{\bar{N}_{q}}=(X)_{\bar{N}_{q}\cdot \Delta^{-}}$  for $X=c_0 ,c,\ell_\infty$. 
Now $e=(e^{(k)})_{k=}^{\infty}$ is the standard basis for $c$ and by  \\
Now $\bar{N}_{q} $ is a triangle and $\Delta^{-}$ is triangle so $\bar{N}_{q}\cdot \Delta^{-}$ is also a triangle and
$$\left(\bar{N}_{q}\cdot \Delta^{-}\right)^{-1}=\left(\Delta^{-}\right)^{-1}\cdot\left(\bar{N}_{q}\right)^{-1}
=\left\{\begin{matrix}
Q_k\left({1\over q_{k+1}}-{1\over q_k}\right)&0\leq k< n\\
-{Q_n\over q_n}& k=n \\
0& k>n
\end{matrix}\right.$$ 
Hence $\left\{s^{(k)}\right\}_{k\in \mathbb{N}}$ is a basis for the space $(c_0,\Delta^{-})_{\bar{N}_{q}}$ and the results i) and ii) are obvious to follow. 
\end{proof}

\textbf{Note:} We consider the standard basis to find the general results related to our sequence spaces. 

\begin{theorem} \label{T1}
The sequence spaces $ \left(\bar{N}_{\Delta^{-}}^{q}\right)_0$, $ \left(\bar{N}_{\Delta^{-}}^{q}\right)$ and $ \left(\bar{N}_{\Delta^{-}}^{q}\right)_\infty$ are BK-spaces with norm $\| ~\|_{\bar{N}_{\Delta^{-}}^{q}}$ given by
\begin{align*}
\| x\|_{\bar{N}_{\Delta^{-}}^{q}}=\sup_{n}\left| {1\over Q_n}\sum_{k=0}^{n}q_k \Delta^{-}x_k \right| 
\end{align*}  
If $Q_n\rightarrow \infty$ ($n\rightarrow \infty$), then $(\bar{N}_{\Delta^{-}}^{q})_0$ has AK, and every sequence $x=(x_k)_{k=0}^{\infty}\in (\bar{N}_{\Delta^{-}}^{q})$ has unique representation 
\begin{align}
x=le+\sum_{k}(\lambda_k-l)e^{(k)}    \label{2}
\end{align}
where $l\in \mathbb{C}$ is such that $x-le\in (\bar{N}_{\Delta^{-}}^{q})_0$ 
\end{theorem}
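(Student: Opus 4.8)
The plan is to regard the three spaces uniformly as the matrix domains $X_T$ with $X\in\{c_0,c,\ell_\infty\}$ and $T=\bar{N}_q\cdot\Delta^-$, and to transport each assertion from the corresponding classical property of $X$, using that $T$ is a triangle. For the BK-property I would first record that $c_0,c,\ell_\infty$ are BK-spaces under $\|\cdot\|_\infty$ and that $T=\bar{N}_q\cdot\Delta^-$, being a product of triangles, is itself a triangle with triangular inverse $S$ (Proposition \ref{P1}). I then invoke the standard principle underlying Proposition \ref{P2}: if $X$ is a BK-space and $T$ a triangle, then $X_T$ is a BK-space under $\|z\|_{X_T}=\|Tz\|_X$, the coordinate functionals remaining continuous because $S=T^{-1}$ is a triangle. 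Substituting the sup-norm of $X$ reproduces exactly $\|x\|_{\bar{N}_{\Delta^-}^q}=\sup_n\bigl|Q_n^{-1}\sum_{k=0}^n q_k\Delta^- x_k\bigr|$, settling the first claim for all three spaces at once.

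For the AK statement I would take $x\in(\bar{N}_{\Delta^-}^q)_0$, put $\tau=Tx\in c_0$, and estimate the standard section remainder $x-x^{[m]}$, where $x^{[m]}=\sum_{k=0}^m x_k e^{(k)}$. Using $\sum_{k=0}^n q_k\Delta^- x_k=Q_n\tau_n$, a short computation gives, for $n>m$,
\[
\bigl[T(x-x^{[m]})\bigr]_n=\tau_n-\frac{Q_m\tau_m+q_{m+1}x_m}{Q_n},
\]
so that $\|x-x^{[m]}\|_{\bar{N}_{\Delta^-}^q}=\sup_{n>m}\bigl|\tau_n-(Q_m\tau_m+q_{m+1}x_m)/Q_n\bigr|$. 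The genuine tail $\sup_{n>m}|\tau_n|$ vanishes because $\tau\in c_0$, and the hypothesis $Q_n\to\infty$ enters precisely to force the remaining correction term to die; this convergence is the AK property.

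For the representation in $(\bar{N}_{\Delta^-}^q)$ I would use the splitting $c=c_0\oplus\langle e\rangle$. Observe first that $(Te)_n=-q_0/Q_n$, so $Q_n\to\infty$ is equivalent to $e\in(\bar{N}_{\Delta^-}^q)_0$; hence the only non-decaying direction in the $c$-case must come from $Se=s^{(-1)}$, for which $TSe=e\notin c_0$. Now for $x$ in this space $\tau=Tx\in c$ has a limit $l=\lim_k\lambda_k$, so $\tau-le\in c_0$; pulling back through $S$ places $x-l\,Se$ in $(\bar{N}_{\Delta^-}^q)_0$, and expanding this element by the AK property just obtained yields the unique decomposition $x=l\,Se+\sum_k(\lambda_k-l)s^{(k)}$, i.e. the representation (\ref{2}) in the basis of Theorem \ref{T}. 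Uniqueness follows from injectivity of $T$, uniqueness of $l$, and uniqueness of the $c_0$-expansion.

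I expect the real obstacle to be the AK step, specifically the correction term $q_{m+1}x_m/Q_n$ forced by the difference operator. In the plain Riesz domain $(c_0)_{\bar{N}_q}$ the remainder collapses to $\sup_{n>m}\bigl|\tau_n-(Q_m/Q_n)\tau_m\bigr|$, which dies automatically since $Q_m/Q_n\le 1$; here, however, the boundary value $x_m$ re-enters, and its size must be controlled against $Q_n\to\infty$. Pinning down that estimate is the only place where the hypothesis is genuinely consumed and is where I would concentrate the effort; everything else is a formal transfer through the isometry $T\colon X_T\to X$.
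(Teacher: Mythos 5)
Your first half is fine and coincides with the paper's own argument: both you and the paper identify the spaces as $X_{\bar{N}_q\cdot\Delta^-}$ for $X\in\{c_0,c,\ell_\infty\}$ and invoke the standard matrix-domain theorem for triangles (Theorem 4.3.12 of \cite{wilansky2000summability}) to get the BK property with the stated norm. The genuine problem is the second half. The paper offers no proof there (it cites Theorem 2 of \cite{Al-Em} and \cite{mal-Nq01}), and your direct attempt stalls exactly at the step you yourself flag: you never prove that the correction term $q_{m+1}x_m/Q_n$ dies, you only assert that $Q_n\to\infty$ forces it to. It does not. The supremum of that term over $n>m$ is $q_{m+1}|x_m|/Q_{m+1}$, and $Q_n\to\infty$ places no constraint on the ratios $q_{m+1}/Q_{m+1}$. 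Concretely, take $q_k=2^k$ (so $Q_k=2^{k+1}-1\to\infty$) and $x=(-1,-\tfrac{1}{2},-\tfrac{1}{2},\ldots)$. Then $\Delta^-x=(1,-\tfrac{1}{2},0,0,\ldots)$, hence $\bar{N}_q\Delta^-x=e^{(0)}\in c_0$, so $x\in(\bar{N}_{\Delta^-}^{q})_0$; but your (correct) remainder formula gives, for $m\geq 1$,
$$\|x-x^{[m]}\|_{\bar{N}_{\Delta^{-}}^{q}}=\sup_{n>m}\frac{q_{m+1}|x_m|}{Q_n}=\frac{2^{m}}{2^{m+2}-1}\longrightarrow \frac{1}{4}\neq 0 .$$
So the sections do not converge to $x$: the AK property fails under the stated hypothesis, and no argument can close your gap. (In the plain Riesz domain the offending term is absent, which is exactly why that case dies automatically; here one would need an additional assumption in the spirit of $q_{n}/Q_{n}\to 0$.)

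The representation part has the same defect plus a mismatch: what your argument actually produces is $x=l\,Se+\sum_k(\lambda_k-l)s^{(k)}$, i.e. the expansion in the basis $\{s^{(-1)},s^{(k)}\}$ of Theorem \ref{T}(ii), not the asserted formula \eqref{2}, which is written in terms of $e$ and $e^{(k)}$. Worse, your own computation $(Te)_n=-q_0/Q_n$ shows that $Q_n\to\infty$ puts $e$ itself inside $(\bar{N}_{\Delta^-}^{q})_0$, so subtracting $le$ never changes $\lim_n(Tx)_n$; consequently, for any $x\in(\bar{N}_{\Delta^-}^{q})$ with $\lim_n(Tx)_n\neq 0$ (for instance $x=T^{-1}e$) there is no scalar $l$ with $x-le\in(\bar{N}_{\Delta^-}^{q})_0$ at all, while for $x$ in the null space every $l$ works. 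Thus \eqref{2} cannot be established by your route or any other. The root cause, which your computations make visible even though your proposal glosses over it, is that this theorem is transplanted from the pure Riesz setting, where $\bar{N}_q e=e$; composing with $\Delta^-$ destroys that identity, and with it both the AK claim and the uniqueness-of-$l$ claim.
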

\begin{proof}
Since $\left(X,\Delta^{-}\right)_{\bar{N}_q}=X_{\bar{N}_q\cdot\Delta^{-}}$ for all $X=c_0,c, \ell_\infty$ and the spaces  $c_0,c, \ell_\infty$ are BK spaces with respect to natural norm [\cite{maddox88}, p.217-218] and the matrix $\bar{N}_q\cdot\Delta^{-}$  is a triangle so by Theorem 4.3.12, \cite{wilansky2000summability}, gives  $ \left(\bar{N}_{\Delta^{-}}^{q}\right)_0$, $ \left(\bar{N}_{\Delta^{-}}^{q}\right)$ and $ \left(\bar{N}_{\Delta^{-}}^{q}\right)_\infty$ are BK spaces\\
The space $(\bar{N}_{\Delta^{-}}^{q})_0$ has AK and the unique representation of elements of $ \left(\bar{N}_{\Delta^{-}}^{q}\right)$ are simply followed from Theorem 2 of \cite{Al-Em} and \cite{mal-Nq01}.
\end{proof}
\subsection{$\beta$ dual of the new spaces}
In order to find the $\beta$ dual we need the results of \cite{st1} which are \\
\begin{lemma}\label{L1}
$A\in (c_0:l_1)$ if and only if 
$$\sup_{K\in F}\left| \sum_{k\in K} a_{nk} \right|<\infty$$
\end{lemma}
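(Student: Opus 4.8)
The plan is to establish both implications of the stated equivalence, where $F$ denotes the collection of all finite subsets of $\mathbb{N}$ and the right-hand side is read as $\sup_{K\in F}\sum_{n}\left|\sum_{k\in K}a_{nk}\right|<\infty$. Throughout I use that $c_0$ and $\ell_1$ are $BK$-spaces and that $c_0^{\beta}=\ell_1$, so that the row requirement $A_n\in c_0^{\beta}$ is exactly $A_n\in\ell_1$, which already guarantees convergence of every $A_n(x)$ for $x\in c_0$.

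For necessity, suppose $A\in(c_0:\ell_1)$. Then the matrix map $L_A:c_0\to\ell_1$, $x\mapsto Ax$, has closed graph: if $x^{(m)}\to x$ in $c_0$ and $Ax^{(m)}\to y$ in $\ell_1$, coordinatewise convergence forces $y=Ax$. Since $c_0$ and $\ell_1$ are Banach spaces, the Closed Graph Theorem gives $L_A\in\mathcal{B}(c_0,\ell_1)$ with finite norm $\|L_A\|$. Fixing $K\in F$ and testing against the characteristic sequence $\chi_K=\sum_{k\in K}e^{(k)}\in c_0$, which satisfies $\|\chi_K\|_\infty=1$, we get $A\chi_K=\left(\sum_{k\in K}a_{nk}\right)_n$, so $\sum_{n}\left|\sum_{k\in K}a_{nk}\right|=\|A\chi_K\|_1\le\|L_A\|$; taking the supremum over $K\in F$ yields the condition.

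For sufficiency, write $M=\sup_{K\in F}\sum_{n}\left|\sum_{k\in K}a_{nk}\right|$ and assume $M<\infty$. First I check the rows lie in $\ell_1$: fixing $n$ and splitting any finite $K$ according to the sign of the real and imaginary parts of $a_{nk}$ gives $\sum_{k\in K}|a_{nk}|\le 4M$, whence $A_n\in\ell_1=c_0^{\beta}$ and $A_n(x)$ converges for all $x\in c_0$. The substantive step is to bound $\sum_{n}|A_n(x)|$ for $x\in c_0$ with $\|x\|_\infty\le1$. Here I would reduce to the finite-subset hypothesis via a layer-cake decomposition: for real $x$ write $x_k=\int_0^1\big(\chi_{\{x_k>t\}}-\chi_{\{x_k<-t\}}\big)\,dt$, note that because $x\in c_0$ the sets $K_t^{+}=\{k:x_k>t\}$ and $K_t^{-}=\{k:x_k<-t\}$ are finite and disjoint for each $t\in(0,1)$, and interchange sum and integral to obtain $\sum_n|A_n(x)|\le\int_0^1\sum_n\big|\sum_{k\in K_t^{+}}a_{nk}-\sum_{k\in K_t^{-}}a_{nk}\big|\,dt\le 2M$. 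The complex case follows by treating real and imaginary parts separately, giving $\sum_n|A_n(x)|\le 4M\|x\|_\infty<\infty$, so $Ax\in\ell_1$ and $A\in(c_0:\ell_1)$.

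The main obstacle is exactly this last step: the hypothesis only controls sums $\sum_{k\in K}a_{nk}$ over \emph{finite} index sets $K$ (equivalently, the action of $A$ on characteristic sequences), whereas a general $x\in c_0$ is an infinite, non-$\pm1$-valued sequence. The layer-cake representation is the device that converts the behaviour on arbitrary $x$ back into finitely many finite-subset sums, and the decay $x_k\to0$ is precisely what keeps the level sets $K_t^{\pm}$ finite so that the hypothesis applies. An alternative is to approximate $x$ by its finitely supported truncations, which are finite $c_0$-combinations of characteristic sequences, and pass to the limit using the uniform bound $M$ and the density of $\phi$ in $c_0$; either route hinges on transferring the finite-subset estimate to the full space.
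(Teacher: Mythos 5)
Your proof is correct, but there is nothing in the paper to compare it against line by line: the paper does not prove this lemma at all, it simply quotes it (together with Lemmas \ref{L2} and \ref{L3}) from the Stieglitz--Tietz characterization tables \cite{st1} as a known tool for computing the $\beta$-duals. So your contribution is a self-contained proof of a result the paper takes on faith. Two observations on your reconstruction. First, you silently repaired the statement: as printed, the condition is missing the outer summation over $n$ and the specification that $F$ is the family of finite subsets of $\mathbb{N}$; the correct condition is $\sup_{K\in F}\sum_{n}\bigl|\sum_{k\in K}a_{nk}\bigr|<\infty$, which is what you proved, and that reading is the standard one. Second, both directions are sound. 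Your necessity argument via the closed graph theorem is exactly the standard proof that matrix maps between BK-spaces are automatically bounded; the paper records this fact separately as Proposition \ref{P3}, so you could have invoked it rather than reproving it, after which testing on characteristic sequences $\chi_K$ finishes the direction. Your sufficiency argument is the genuinely substantive part: the layer-cake identity $x_k=\int_0^1\bigl(\chi_{\{x_k>t\}}-\chi_{\{x_k<-t\}}\bigr)\,dt$ legitimately transfers the finite-subset hypothesis to arbitrary $x\in c_0$, the level sets $K_t^{\pm}$ are finite precisely because $x_k\to 0$, and the interchange of $\sum_n$, $\sum_k$ and $\int_0^1$ is justified by Fubini--Tonelli once you have the row bound $\|A_n\|_1\le 4M$, which you established first. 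The constants $2M$ and $4M$ are not optimal, but optimality is irrelevant to the equivalence. Your alternative route (Abel summation over sorted level sets to get $\|Ax\|_1\le 2M\|x\|_\infty$ for finitely supported real $x$, then density of $\phi$ in $c_0$ plus coordinatewise identification of the limit) is equally valid and is in fact closer to the argument found in the classical literature the paper cites.
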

\begin{lemma}\label{L2}
$A\in (c_0:c)$ if and only if 
$$\sup_{n}\sum\left|a_{nk} \right|<\infty,$$
$$\lim_{n\rightarrow\infty} a_{nk}-\alpha_k=0.$$
\end{lemma}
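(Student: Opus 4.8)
The plan is to prove this classical Kojima--Schur type characterization by exploiting the Banach space structure of $c_0$ and $c$ together with a standard interchange-of-limits argument. For the necessity, suppose $A\in(c_0:c)$. Each transform $A_n(x)=\sum_k a_{nk}x_k$ must be defined for every $x\in c_0$, so $A_n\in c_0^{\beta}=\ell_1$; in particular every row lies in $\ell_1$ and the functional $f_n(x)=A_n(x)$ on $c_0$ has norm $\|f_n\|=\sum_k|a_{nk}|$. Applying $A$ to the unit vectors $e^{(k)}\in c_0$ gives $A(e^{(k)})=(a_{nk})_{n=0}^{\infty}\in c$, which yields the existence of $\alpha_k=\lim_{n}a_{nk}$ for each fixed $k$; this is the second condition. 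For the first condition I would invoke the uniform boundedness principle: since $Ax\in c\subseteq\ell_\infty$ for every $x$, the family $(f_n)$ is pointwise bounded on the Banach space $c_0$, hence $\sup_n\|f_n\|=\sup_n\sum_k|a_{nk}|<\infty$.

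For the sufficiency, assume both conditions and set $M=\sup_n\sum_k|a_{nk}|$. First I would show $(\alpha_k)\in\ell_1$ with $\sum_k|\alpha_k|\le M$: for any finite $N$ we have $\sum_{k\le N}|\alpha_k|=\lim_n\sum_{k\le N}|a_{nk}|\le M$, and letting $N\to\infty$ gives the bound. The boundedness hypothesis guarantees $A_n(x)$ is defined for every $x\in c_0$, and the candidate limit is the well-defined functional $f(x)=\sum_k\alpha_k x_k$. It then remains to prove $A_n(x)\to f(x)$ for each $x\in c_0$ by an $\varepsilon/3$ splitting: given $\varepsilon>0$, since $x\in c_0$ choose $N$ with $|x_k|<\varepsilon$ for all $k>N$, so that the two tails satisfy $\sum_{k>N}|a_{nk}x_k|\le\varepsilon M$ and $\sum_{k>N}|\alpha_k x_k|\le\varepsilon M$ uniformly in $n$, while the finite head $\sum_{k\le N}(a_{nk}-\alpha_k)x_k\to 0$ as $n\to\infty$ by the column limits. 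Combining these shows $\limsup_n|A_n(x)-f(x)|\le 2\varepsilon M$, and since $\varepsilon$ is arbitrary, $A_n(x)\to f(x)\in\mathbb{C}$, so $Ax\in c$.

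The main obstacle is the necessity of the uniform boundedness condition, which genuinely relies on the completeness of $c_0$ via the Banach--Steinhaus theorem rather than on any elementary estimate. The sufficiency direction, though technical, reduces to the essential interchange of the limit over $n$ with the infinite sum over $k$; the decisive point there is that for $x\in c_0$ it is the smallness of $x_k$ on the tail, combined with the uniform bound $\sum_k|a_{nk}|\le M$, that controls the remainder, so no uniform smallness of the matrix tails themselves is needed.
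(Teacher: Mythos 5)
Your proof is correct and complete, but there is nothing in the paper to compare it against: the paper does not prove this lemma at all. It is imported verbatim as a known result from the literature (the reference \cite{st1}, introduced with ``In order to find the $\beta$ dual we need the results of...''), and is used only as an ingredient in the $\beta$-dual computation of Theorem \ref{T2}. What you have supplied is the standard classical argument that the cited source encapsulates: for necessity, each row must lie in $c_0^{\beta}=\ell_1$ so that the functionals $f_n(x)=A_n(x)$ are bounded with $\|f_n\|=\sum_k|a_{nk}|$, the column condition follows by testing $A$ on the unit vectors $e^{(k)}$, and the uniform row bound follows from Banach--Steinhaus, since $Ax\in c\subset\ell_\infty$ gives pointwise boundedness on the Banach space $c_0$; for sufficiency, the $\varepsilon$-splitting into a finite head (handled by the column limits) and two tails (handled uniformly by $M=\sup_n\sum_k|a_{nk}|$ and the decay of $x\in c_0$) correctly justifies the interchange of limits, showing $A_n(x)\to\sum_k\alpha_k x_k$. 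Both halves are sound, including the preliminary observation that $(\alpha_k)\in\ell_1$ with $\sum_k|\alpha_k|\le M$. In effect, you have made self-contained a step the paper delegates to a citation; the only caveat is that your argument, relying on completeness of $c_0$ via the uniform boundedness principle, is the functional-analytic route, whereas purely gliding-hump proofs of the same lemma exist, but either way the result and your reasoning are correct.
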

\begin{lemma}\label{L3}
 $A\in (c_0:\ell_\infty)$ if and only if 
$$\sup_{n}\sum\left|a_{nk} \right|<\infty,$$
\end{lemma}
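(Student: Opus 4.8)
The plan is to establish the two implications separately: sufficiency by a direct norm estimate, and necessity by the uniform boundedness principle applied to the rows of $A$ regarded as functionals on $c_0$.

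For sufficiency, I would assume $M := \sup_n \sum_k |a_{nk}| < \infty$ and take an arbitrary $x = (x_k) \in c_0$. Since $x$ is bounded and each row sum is finite, the series $A_n(x) = \sum_k a_{nk} x_k$ converges absolutely for every $n$, so $A_n \in c_0^\beta$. The elementary bound $|A_n(x)| \le \sum_k |a_{nk}|\,|x_k| \le \|x\|_\infty \sum_k |a_{nk}| \le M\,\|x\|_\infty$ holds uniformly in $n$, whence $\sup_n |A_n(x)| < \infty$, that is $Ax \in \ell_\infty$. This gives $A \in (c_0 : \ell_\infty)$.

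For necessity, I would start from the definition: $A \in (c_0 : \ell_\infty)$ forces $A_n \in c_0^\beta = \ell_1$ for each $n$, so $\sum_k |a_{nk}| < \infty$ for every fixed $n$; the whole point is to make this bound uniform in $n$. I would identify each row with a linear functional $f_n(x) = A_n(x)$ on $c_0$. Because the continuous dual of the Banach space $c_0$ is isometrically $\ell_1$, each $f_n$ is bounded with $\|f_n\| = \sum_k |a_{nk}|$. The hypothesis $Ax \in \ell_\infty$ for every $x$ says exactly that $\sup_n |f_n(x)| = \|Ax\|_\infty < \infty$ for each fixed $x \in c_0$, i.e. the family $\{f_n\}$ is pointwise bounded. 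Since $c_0$ is complete, the Banach-Steinhaus theorem then yields $\sup_n \|f_n\| < \infty$, which is precisely $\sup_n \sum_k |a_{nk}| < \infty$.

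The main obstacle is this last passage from pointwise to uniform boundedness of the row $\ell_1$-norms; the two facts that make it work are the isometric identification $c_0^* \cong \ell_1$ (so that the $\ell_1$-norm of a row equals the operator norm of its functional) and the completeness of $c_0$ (without which Banach-Steinhaus fails). Should one prefer a self-contained argument avoiding the abstract principle, I would instead argue by contradiction with a gliding-hump construction: assuming $\sup_n \sum_k |a_{nk}| = \infty$, one selects an increasing sequence of row indices $n_j$ and disjoint finite blocks of columns on which $\sum |a_{n_j k}|$ is large, then defines a single $x \in c_0$ block-by-block using unimodular factors matching the phases of the $a_{n_j k}$ scaled by a null sequence, forcing $|A_{n_j}(x)| \to \infty$ and contradicting $Ax \in \ell_\infty$. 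For brevity I would present the functional-analytic version.
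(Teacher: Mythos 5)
Your proof is correct. Note, however, that the paper itself offers no proof of this lemma at all: it is quoted verbatim as a known result from the literature (the reference \cite{st1}, the classical compilation of matrix-transformation characterizations), and serves only as a tool for computing the $\beta$-duals in Theorem \ref{T2}. So there is no internal argument to compare against; what you have supplied is the standard textbook proof that the citation points to. Your sufficiency direction is the elementary estimate $|A_n(x)| \leq \|x\|_\infty \sum_k |a_{nk}|$, and your necessity direction correctly combines the two essential ingredients: the isometric identification $c_0^* \cong \ell_1$, which converts the row condition $\sup_n \sum_k |a_{nk}| < \infty$ into uniform boundedness of the functionals $f_n = A_n(\cdot)$, and the completeness of $c_0$, which licenses Banach--Steinhaus to pass from the pointwise bound $\sup_n |f_n(x)| = \|Ax\|_\infty < \infty$ to the uniform one. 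One small point worth making explicit if you write this up fully: the identification $c_0^\beta = \ell_1$ (i.e., that convergence of $\sum_k a_{nk} x_k$ for all $x \in c_0$ already forces $\sum_k |a_{nk}| < \infty$) is itself a nontrivial step, usually proved by a gliding-hump or uniform-boundedness argument applied to the partial-sum functionals; you invoke it as known, which is acceptable since it is standard, but it is doing real work in the necessity direction. Your sketched gliding-hump alternative is the self-contained route that avoids both abstract inputs at once.
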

\begin{theorem}\label{T2}
Let $(q_k)_{k=0}^{\infty}$ be positive sequences, $Q_n=\sum_{i=0}^{n} q_{i} $ and  $a=(a_k)\in w$ we define a matrix $C=(c_{nk})_{n,k=0}^{\infty}$ as 
 $$c_{nk}=\left\{\begin{matrix}
Q_k\left({1\over q_{k+1}}-{1\over q_k}\right)\sum_{j=k+1}^{n}a_j &0\leq k< n\\
-{Q_k a_k\over q_k}& k=n \\
0& k>n \end{matrix}\right.$$
and consider the sets 
\begin{align*}
c_1&=\left\{a\in w:\sup_{n}\sum_{k} |c_{nk}|<\infty\right\} &;
c_2&=\left\{a\in w:\lim_{n\rightarrow\infty} c_{nk} \text{ exists for each }~k\in \mathbb{N}\right\}\\
c_3&=\left\{a\in w:\lim_{n\rightarrow\infty}\sum_{k} |c_{nk}|=\sum_{k} \left|\lim_{n\rightarrow\infty} c_{nk}\right|\right\}&;
c_4&=\left\{a\in w:\lim_{n\rightarrow\infty} \sum_{k} c_{nk} \text{ exists }\right\}
\end{align*}
Then $\left[\left(\bar{N}_{\Delta^{-}}^{q}\right)_0\right]^{\beta}=c_1\cap c_2$ , 
$\left[\left(\bar{N}_{\Delta^{-}}^{q}\right)\right]^{\beta}=c_1\cap c_2\cap c_4$ and \\
$\left[\left(\bar{N}_{\Delta^{-}}^{q}\right)_\infty\right]^{\beta}=c_2\cap c_3$. 
\end{theorem}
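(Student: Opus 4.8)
The plan is to convert each $\beta$-dual into a question about membership in a classical matrix class, the standard device for the matrix domain of a triangle. Since $T=\bar{N}_{q}\cdot\Delta^{-}$ is a triangle, Proposition \ref{P1} furnishes a triangular inverse $S=T^{-1}$ with $x=S\tau$ for every $x\in w$, where $\tau=(\bar{N}_{\Delta^{-}}^{q})(x)$ is the transform of \eqref{tau}; its entries were already computed inside the proof of Theorem \ref{T}, namely $S_{nk}=Q_k\!\left(\tfrac{1}{q_{k+1}}-\tfrac{1}{q_k}\right)$ for $0\le k<n$, $S_{nn}=-\tfrac{Q_n}{q_n}$, and $S_{nk}=0$ for $k>n$. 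In particular $x\in(\bar{N}_{\Delta^{-}}^{q})_X$ if and only if $\tau\in X$, for each of $X=c_0,c,\ell_\infty$.

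Fix $a\in w$. Because $S$ is a triangle, $x_n=\sum_{k=0}^{n}S_{nk}\tau_k$ is a finite sum, so each partial sum $\sum_{n=0}^{m}a_nx_n$ is a finite double sum whose order of summation may be exchanged with no convergence hypothesis. Collecting the coefficient of $\tau_k$ gives
\begin{align*}
\sum_{n=0}^{m}a_n x_n=\sum_{k=0}^{m}c_{mk}\tau_k,\qquad c_{mk}=\sum_{j=k}^{m}a_j S_{jk}.
\end{align*}
Here the $j=k$ term contributes the diagonal value $a_kS_{kk}=-\tfrac{Q_k a_k}{q_k}$, while for the indices $j>k$ the off-diagonal value $Q_k(\tfrac{1}{q_{k+1}}-\tfrac{1}{q_k})$ is constant in $j$ and so factors out of the tail $\sum_{j=k+1}^{m}a_j$; assembling these pieces produces the triangle $C=(c_{nk})$ attached to the statement. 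Since $ax\in cs$ means precisely that the partial sums $\big(\sum_{n\le m}a_n x_n\big)_m=C\tau$ converge, and $\tau$ ranges over all of $X$ as $x$ ranges over $(\bar{N}_{\Delta^{-}}^{q})_X$, I obtain
\begin{align*}
a\in\left[(\bar{N}_{\Delta^{-}}^{q})_X\right]^{\beta}\iff C\in(X,c),\qquad X=c_0,c,\ell_\infty.
\end{align*}
As $C$ is lower triangular, every row $C_n$ is finitely supported and hence automatically lies in $X^{\beta}$, so membership in $(X,c)$ means just $C\tau\in c$ for all $\tau\in X$, with no side condition.

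It then remains to quote the relevant matrix-class characterizations. For $X=c_0$, Lemma \ref{L2} gives $C\in(c_0,c)$ iff $\sup_n\sum_k|c_{nk}|<\infty$ and $\lim_n c_{nk}$ exists for each $k$, that is, $a\in c_1\cap c_2$. For $X=c$ the Kojima--Schur characterization from \cite{st1} appends to these the existence of $\lim_n\sum_k c_{nk}$, i.e.\ condition $c_4$, yielding $c_1\cap c_2\cap c_4$. For $X=\ell_\infty$ the characterization of $(\ell_\infty,c)$ requires that $\lim_n c_{nk}$ exist for each $k$ together with $\lim_n\sum_k|c_{nk}|=\sum_k\big|\lim_n c_{nk}\big|$, i.e.\ $c_2\cap c_3$. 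Substituting the explicit entries $c_{nk}$ then gives the three asserted duals.

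The one genuinely delicate point is the bookkeeping that assembles $c_{nk}$: the $j=k$ diagonal contribution $-\tfrac{Q_k a_k}{q_k}$ must be kept separate from the off-diagonal block, since both feed into the entry for $k<n$. A quick sanity check is the Cesàro case $q_k\equiv1$, where $S$ is diagonal yet $c_{nk}=-\tfrac{Q_k a_k}{q_k}$ for every $k\le n$; this shows the diagonal term survives in each column and cannot be dropped. Everything downstream of the identity $\sum_{n\le m}a_n x_n=\sum_k c_{mk}\tau_k$ is a direct appeal to the three matrix-class results, so the whole weight of the argument rests on this rearrangement.
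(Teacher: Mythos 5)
Your approach is the same as the paper's: write $x=S\tau$ with $S$ the inverse of the triangle $\bar{N}_{q}\cdot\Delta^{-}$, rearrange the finite partial sums $\sum_{n\le m}a_nx_n$ into $(C\tau)_m$, and then quote the classical characterizations of $(c_0,c)$, $(c,c)$ and $(\ell_\infty,c)$. You are in fact more complete than the paper, which carries out only the $(\bar{N}_{\Delta^{-}}^{q})_0$ case and asserts that the other two follow by the same procedure, without stating the Kojima--Schur and Schur-type lemmas it would need.

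However, there is a real discrepancy that you gloss over. Your (correct) computation gives, for $k<m$,
\begin{equation*}
c_{mk}=\sum_{j=k}^{m}a_jS_{jk}
=Q_k\left(\frac{1}{q_{k+1}}-\frac{1}{q_k}\right)\sum_{j=k+1}^{m}a_j-\frac{Q_ka_k}{q_k},
\end{equation*}
i.e.\ the diagonal contribution $-\frac{Q_ka_k}{q_k}$ enters \emph{every} entry with $k\le m$, whereas the matrix $C$ defined in the statement carries that term only at $k=m$. So your claim that the assembly ``produces the triangle $C$ attached to the statement'' is false, and your own Ces\`{a}ro sanity check proves it: for $q_k\equiv 1$ the statement's $C$ is the \emph{diagonal} matrix with $c_{nn}=-(n+1)a_n$, while your computed matrix has $c_{nk}=-(k+1)a_k$ for all $k\le n$. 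The difference is not cosmetic. With $q\equiv 1$ the statement's $C$ lies in $(c_0,c)$ iff $\sup_n(n+1)|a_n|<\infty$, whereas your matrix lies in $(c_0,c)$ iff $\sum_k(k+1)|a_k|<\infty$; the sequence $a_k=1/(k+1)$ separates the two conditions, and only the latter describes the true $\beta$-dual, since for $q\equiv 1$ the space $(\bar{N}_{\Delta^{-}}^{q})_0$ is $\{x: x_n=o(n+1)\}$. To be fair, the paper's own proof makes the identical conflation: its displayed computation produces the coefficient with the diagonal term included (modulo an index shift) and then simply labels it $(Cy)_n$. So the defect lies in the theorem's printed definition of $C$ rather than in your derivation; but a correct write-up must say this explicitly, prove the result for the corrected matrix, and not assert that the corrected matrix coincides with the one in the statement.
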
  
\begin{proof}
We prove the result for $\left[\left(\bar{N}_{\Delta^{-}}^{q}\right)_0\right]^{\beta}$ for the other two same procedure can be followed. Let  $x\in \left(\bar{N}_{\Delta^{-}}^{q}\right)_0$ then there exists a $y$ such that $y= \bar{N}_{\Delta^{-}}^{q} x$.\\
Hence 
\begin{align*}
\sum_{k=0}^{n}a_kx_k&=\sum_{k=0}^{n}a_k\left(\bar{N}_{\Delta^{-}}^{q}\right)^{-1}y_k\\
&=\sum_{k=0}^{n}a_k \left[\sum_{j=0}^{k-1}Q_j\left({1\over q_{j+1}}-{1\over q_{j}}\right) y_j-{Q_k\over q_k}y_k\right]\\
&=\sum_{k=0}^{n} \left[Q_{k-1}\left({1\over q_{k}}-{1\over q_{k-1}}\right)\sum_{j=k+1}^{n}a_j-{Q_ka_k\over q_k} \right]y_k\\
&=(Cy)_n
\end{align*}
So $ax=(a_nx_n)\in cs$ whenever $x\in \left(\bar{N}_{\Delta^{-}}^{q}\right)_0$ if and only if $Cy\in cs$ whenever $y\in c_0$.\\

Using Lemma \ref{L2} we get  $\left[\left(\bar{N}_{\Delta^{-}}^{q}\right)_0\right]^{\beta}=c_1\cap c_2$ 
In the same way we can show the other two results as well. 
\end{proof} 

By Theorem 7.2.9, \cite{wilansky2000summability} we know that if $X$ is a BK-space and $a\in w$ then 
$$\|a\|^*=\sup\left\{\left| \sum_{k=0}^{\infty} a_kx_k\right|:\|x\|=1\right\}$$
provided the term on the right side exists and is finite, which is the case whenever $a\in X^\beta$.
\begin{theorem}\label{T3}
For $\left[\left(\bar{N}_{\Delta^{-}}^{q}\right)_0\right]^{\beta}$ ,  $\left[\left(\bar{N}_{\Delta^{-}}^{q}\right)\right]^{\beta}$ and $\left[\left(\bar{N}_{\Delta^{-}}^{q}\right)_\infty\right]^{\beta}$ the norm $\|~\|^*$ is defined as

$$\|a\|^*=\sup_{n}\left[\sum_{k=0}^{n-1}Q_{k}\left|\left({1\over q_{k+1}}-{1\over q_k}\right) \sum_{j=k+1}^{n}a_j\right|+
\left|{Q_na_n\over q_n}\right|\right]$$ 
\end{theorem}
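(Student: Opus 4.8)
The plan is to reduce the computation of $\|a\|^*$ over the unit sphere of each of the three spaces to a computation over the unit sphere of the corresponding base space $c_0$, $c$ or $\ell_\infty$, and then to recognise the resulting quantity as the $n$th absolute row sum of the matrix $C$ of Theorem \ref{T2}. Throughout let $X$ denote any of $\left(\bar{N}_{\Delta^{-}}^{q}\right)_0$, $\left(\bar{N}_{\Delta^{-}}^{q}\right)$, $\left(\bar{N}_{\Delta^{-}}^{q}\right)_\infty$, and fix $a\in X^\beta$. From the proof of Theorem \ref{T2}, writing $y=\bar{N}_{\Delta^{-}}^{q}x$ one has the partial-sum identity $\sum_{k=0}^{n}a_kx_k=(Cy)_n=\sum_{k=0}^{n}c_{nk}y_k$ for every $n$. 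Since $\bar{N}_q\cdot\Delta^{-}$ is a triangle, $x\mapsto y$ is a linear bijection of $X$ onto the base space with $\|x\|_{\bar{N}_{\Delta^{-}}^{q}}=\|y\|_\infty$; hence $\|x\|=1$ if and only if $\|y\|_\infty=1$ with $y\in c_0$, $c$ or $\ell_\infty$ respectively. Because $a\in X^\beta$ the series converges, so the definition from Theorem 7.2.9 of \cite{wilansky2000summability} becomes
\[
\|a\|^*=\sup\left\{\left|\lim_{n\to\infty}\sum_{k=0}^{n}c_{nk}y_k\right|:\ \|y\|_\infty=1\right\}.
\]

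For the upper bound I would estimate each partial sum directly. For any admissible $y$ and any fixed $n$ the triangle inequality gives $\left|\sum_{k=0}^{n}c_{nk}y_k\right|\le\sum_{k=0}^{n}|c_{nk}|\,|y_k|\le\sum_{k=0}^{n}|c_{nk}|\le\sup_m\sum_{k}|c_{mk}|$. Letting $n\to\infty$ and then taking the supremum over the unit sphere yields $\|a\|^*\le\sup_n\sum_k|c_{nk}|$, and writing out $c_{nk}=Q_k\left(\tfrac{1}{q_{k+1}}-\tfrac{1}{q_k}\right)\sum_{j=k+1}^{n}a_j$ for $k<n$ together with the diagonal term $c_{nn}=-Q_na_n/q_n$ reproduces exactly the claimed expression.

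The reverse inequality is the crux. For each fixed $n$ I would test against the finitely supported sign sequence $y^{(n)}$ with $y^{(n)}_k=\operatorname{sgn}\!\left(c_{nk}\right)$ for $k\le n$ and $y^{(n)}_k=0$ otherwise; this has $\|y^{(n)}\|_\infty\le 1$ and lies in $c_0\subset c\subset\ell_\infty$, so it is admissible for all three spaces simultaneously, and the preimage $x^{(n)}=\left(\bar{N}_{\Delta^{-}}^{q}\right)^{-1}y^{(n)}$ lies on the unit sphere of each $X$. The difficulty is that the functional is the $n\to\infty$ \emph{limit} of the partial sums, whereas the optimal signs are chosen at a \emph{finite} level $n$, so evaluating on $x^{(n)}$ does not immediately return $\sum_{k}|c_{nk}|$. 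The main obstacle is therefore controlling this interchange of limiting processes: one must use the column limits $\lim_m c_{mk}$ guaranteed by the condition $c_2$ together with the uniform row bound $c_1$ (and $c_3$ in the $\ell_\infty$ case) to argue that the finite-level estimate $\sum_{k=0}^{n}|c_{nk}|$ is recovered in the limit and not degraded by the tail of $C$. This is the delicate point on which the equality, rather than a mere inequality, rests.

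Finally, since the defining quantity $\sum_{k}|c_{nk}|$ is one and the same expression for all three spaces, and since the test sequences above belong to $c_0$, $c$ and $\ell_\infty$ alike, the argument settles $\left[\left(\bar{N}_{\Delta^{-}}^{q}\right)_0\right]^{\beta}$, $\left[\left(\bar{N}_{\Delta^{-}}^{q}\right)\right]^{\beta}$ and $\left[\left(\bar{N}_{\Delta^{-}}^{q}\right)_\infty\right]^{\beta}$ by the same chain of inequalities; only the admissible class of $y$ changes, and the common value $\sup_n\sum_k|c_{nk}|$ is unaffected.
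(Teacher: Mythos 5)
Your first half (the bound $\|a\|^*\le\sup_n\sum_k|c_{nk}|$) is exactly the paper's own computation: the paper writes $\sum_{k=0}^{n}a_kx_k=(Cy)_n$, estimates $|(Cy)_n|\le\|y\|_\infty\sum_k|c_{nk}|$, and concludes $\|a\|^{*}\le\|a\|_{\Pi}:=\sup_n\sum_k|c_{nk}|$. The genuine gap is the reverse inequality. You correctly isolate it as the crux, but you never prove it; you only describe what ``one must'' show. To be fair, the paper does no better --- it dismisses precisely this step with the sentence ``The inequality $\|a\|_{\Pi}\leq\|a\|^{*}$ is obvious'' --- so your proposal is no less complete than the published proof. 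But the step is not merely non-obvious: it is false, and the obstruction you identified (the optimal signs are chosen at a finite level $n$, while the functional is the limit over $m\to\infty$ of $(Cy)_m$, whose rows keep changing) cannot be argued away.

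Here is why. Let $X=\left(\bar{N}_{\Delta^{-}}^{q}\right)_0$ and $a\in X^{\beta}$, so that $\sup_m\sum_k|c_{mk}|<\infty$ and the column limits $\alpha_k=\lim_m c_{mk}$ exist (the sets $c_1$, $c_2$ of Theorem \ref{T2}). For $y\in c_0$ these two facts force $\sum_k a_kx_k=\lim_m(Cy)_m=\sum_k\alpha_k y_k$, since the columns beyond any index $K$ contribute at most $\sup_{k>K}|y_k|\cdot\sup_m\sum_k|c_{mk}|$. Hence your test vectors return $\sum_{k\le n}\alpha_k\operatorname{sgn}(c_{nk})$, and no admissible $y$ can yield more than $\sum_k|\alpha_k|$; in fact $\|a\|^{*}=\sum_k|\alpha_k|$ exactly, the $\ell_1$-norm of the column limits, and this can be far smaller than $\sup_n\sum_k|c_{nk}|$. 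Concretely, take $q_k=2^k$ (so $Q_k=2^{k+1}-1$) and $a=e^{(N)}-e^{(N+1)}$. Then for every $x$, with $y=\bar{N}_{q}\Delta^{-}x$,
\[
\sum_k a_kx_k=x_N-x_{N+1}=-\frac{Q_N}{q_{N+1}}\,y_N+\frac{Q_{N+1}}{q_{N+1}}\,y_{N+1},
\]
so $\|a\|^{*}=\frac{Q_N+Q_{N+1}}{q_{N+1}}=3-2^{-N}<3$ on all three spaces, whereas the expression claimed in Theorem \ref{T3}, evaluated at $n=N$, already equals $\sum_{k=0}^{N-1}\left(1-2^{-k-1}\right)+\left(2-2^{-N}\right)=N+1$. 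The claimed supremum therefore overshoots the true norm by an arbitrarily large margin: the equality fails, and it fails exactly at your unresolved ``delicate point.'' The correct value is the $\ell_1$-norm of the full column sums of the inverse triangle, $\|a\|^{*}=\sum_{k}\bigl|\sum_{j=k}^{\infty}s_{jk}a_j\bigr|$ with $S=\left(\bar{N}_{q}\Delta^{-}\right)^{-1}$ (truncated sums $\sum_{j=k+1}^{n}a_j$ replaced by full tails, diagonal terms absorbed), and that is precisely what your sign-vector construction does prove if you choose the signs $\operatorname{sgn}(\alpha_k)$ and let $n\to\infty$.
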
 
\begin{proof}
If $x^{[n]}$ denotes the $n$th section of the sequence $x\in \left(\bar{N}_{\Delta^{-}}^{q}\right)_0$ then using (\ref{tau}) we have 
\begin{align*}
\tau_{k}^{[n]}=\tau_{k}(x^{[n]})={1\over Q_k}\sum_{j=0}^{k}q_j\Delta^{-}x_{j}^{[n]}
\end{align*}
Let $a\in \left[\left(\bar{N}_{\Delta^{-}}^{q}\right)_0\right]^{\beta}$, then for any non-negative integer $n$ define the sequence $d^{[n]}$ as 
$$d_{k}^{[n]}= \left\{\begin{matrix}
Q_k\left({1\over q_{k+1}}-{1\over q_k}\right)\sum_{j=k+1}^{n}a_j &0\leq k< n\\
-{Q_k a_k\over q_k}& k=n \\
0& k>n \end{matrix}\right.$$
Let $\|a\|_{\Pi} = \sup_{n}\| d^{[n]}\|_1=\sup_{n}\left(\sum_{k=0}^{\infty}|d_k^{[n]}|\right) $  where   $\Pi=\left[\left(\bar{N}_{\Delta^{-}}^{q}\right)\right]^{\beta}$. 
The inequality $\|a\|_{\Pi}\leq \|a\|^{*}~~~~\label{n}$ is obvious. \\
Also 
\begin{align*}
\left|\sum_{k=0}^{\infty}a_k x_k^{[n]}\right|&=\left|\sum_{k=0}^{n}a_k \left( \sum_{j=0}^{k}{1\over q_j}(Q_j\tau_{j}^{[n]}-
Q_{j-1}\tau_{j-1}^{[n]})\right)\right| \\
&\leq \left|\sum_{k=0}^{n-1} Q_k\left( {1\over q_{k+1}}-{1\over q_k}\right)\left(\sum_{j=k+1}^{n}a_j \right)\tau_{k}^{[n]}\right|+\left|{a_nQ_n\over q_n} \right||\tau_{n}^{[n]}|\\
&\leq \sup_{k}|\tau_{k}^{[n]}|\cdot \left(Q_k\left({1\over q_{k+1}}-{1\over q_k}\right)\sum_{j=k+1}^{n}a_j+ \left|a_nQ_n\over q_n \right|\right) \\
&=\|x^{[n]}\|_{\bar{N}_{\Delta^-}^{q}}\|d^{[n]}\|_1 \\
&=\|a\|_{\Pi}\|x^{[n]}\|_{\bar{N}_{\Delta^-}^{q}}
\end{align*}
Hence $\|a\|^{*}\leq \|a\|_{\Pi}   $ \\
From the above inequalities we get the required conclusion. 
\end{proof}

Following are some well known results \\
\begin{proposition}\label{P3}
(cf. \cite{mal98}, Theorem 7) Let $X$ and $Y$ be BK spaces, then $(X,Y)\subset \mathcal{B}(X,Y)$ that is every matrix $A$ from $X$ into $Y$ defines an element $L_A$ of $\mathcal{B}(X,Y)$ where 
$$L_A(x)=A(x)~~~~~~~~~~~~~~\forall~x\in X$$
Also $A\in (X,\ell_\infty)$ if and only if
$$\|A\|^*=\sup_{n}\|A_n\|^*=\|L_A
\|<\infty $$ 
If $\left(b^{(k)}\right)_{k=0}^{\infty}$ is a basis of $X , Y$ and 
$Y_1$ are FK spaces with $Y_1$ a closed subspace of $Y$, then 
$A\in (X,Y_1)$ if and only if $A\in (X,Y)$ and $A\left(b^{(k)}\right)
\in Y_1$ for all $k=0,1,2,\ldots$.
\end{proposition}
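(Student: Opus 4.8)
The plan is to prove the three assertions separately, each resting on the defining feature of BK (and FK) spaces: the coordinate functionals are continuous. For the first, $(X,Y)\subset\mathcal{B}(X,Y)$, I would fix $A\in(X,Y)$ and show the induced map $L_A$, $L_A(x)=Ax$, is a bounded linear operator via the closed graph theorem (legitimate since BK spaces are Banach). Linearity is immediate, and I would record two preliminary facts. (a) Since $X$ is a BK space and $A_n\in X^{\beta}$, each row functional $x\mapsto A_n(x)=\sum_k a_{nk}x_k$ is continuous on $X$: the partial sums $\sum_{k=0}^{m}a_{nk}x_k$ are continuous because coordinate maps are continuous on a BK space, and they converge for every $x\in X$ by the definition of $X^{\beta}$, so the Banach--Steinhaus theorem makes the pointwise limit $A_n$ continuous. (b) The coordinate maps $y\mapsto y_n$ are continuous on the BK space $Y$. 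Now if $x^{(m)}\to x$ in $X$ and $L_A x^{(m)}\to z$ in $Y$, then for each fixed $n$, fact (b) gives $\bigl(L_A x^{(m)}\bigr)_n\to z_n$, while fact (a) gives $\bigl(L_A x^{(m)}\bigr)_n=A_n\bigl(x^{(m)}\bigr)\to A_n(x)=(L_A x)_n$. Hence $z_n=(L_A x)_n$ for all $n$, so $z=L_A x$, the graph is closed, and $L_A\in\mathcal{B}(X,Y)$.

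For the second assertion, the norm identity for $A\in(X,\ell_\infty)$ is a direct interchange of suprema: writing $\|A_n\|^{*}=\sup\{|A_n(x)|:\|x\|=1\}$,
\[
\|L_A\|=\sup_{\|x\|=1}\|Ax\|_\infty=\sup_{\|x\|=1}\sup_n|A_n(x)|=\sup_n\sup_{\|x\|=1}|A_n(x)|=\sup_n\|A_n\|^{*}.
\]
The equivalence then follows: if $A\in(X,\ell_\infty)$ the first assertion gives $\|L_A\|<\infty$, hence $\sup_n\|A_n\|^{*}<\infty$; conversely, if $\sup_n\|A_n\|^{*}<\infty$ then $|A_n(x)|\le\|A_n\|^{*}\,\|x\|\le\bigl(\sup_n\|A_n\|^{*}\bigr)\|x\|$ for every $x$, so $Ax\in\ell_\infty$ and $A\in(X,\ell_\infty)$.

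For the third assertion the forward implication is trivial: if $A\in(X,Y_1)$ then $A\in(X,Y)$ because $Y_1\subset Y$, and each $b^{(k)}\in X$ yields $A(b^{(k)})\in Y_1$. For the converse, assume $A\in(X,Y)$ and $A(b^{(k)})\in Y_1$ for all $k$. Given $x\in X$ with basis expansion $x=\sum_k\xi_k b^{(k)}$ (convergent in $X$), the continuity of $L_A$, in its FK form, gives $Ax=L_A x=\sum_k\xi_k A(b^{(k)})$ with convergence in $Y$. Every partial sum lies in $Y_1$, and $Y_1$ is closed in $Y$, so the limit $Ax\in Y_1$; thus $A\in(X,Y_1)$.

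The step I expect to be the main obstacle is the careful justification of fact (a), the continuity of the row functionals $A_n$, since it is precisely here that the BK/FK hypothesis enters through Banach--Steinhaus; similarly, the third assertion requires the continuity of $L_A$ in the FK (Fr\'echet, not necessarily normable) setting, which is the natural generalization of the first assertion. Once these continuity statements are secured, the interchange of suprema and the basis-closedness argument are routine.
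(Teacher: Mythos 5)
Your proof is correct in all three parts: the closed graph argument (with Banach--Steinhaus supplying continuity of the row functionals $A_n$) for $(X,Y)\subset\mathcal{B}(X,Y)$, the interchange of suprema giving $\|L_A\|=\sup_n\|A_n\|^{*}$, and the closedness-of-$Y_1$ argument for the basis criterion are exactly the standard arguments. Note that the paper itself offers no proof of Proposition \ref{P3} --- it is quoted from the literature (cf. \cite{mal98}, Theorem 7) --- so there is nothing internal to compare against; your reconstruction matches the classical proof of the cited result, including the one genuinely delicate point you flag, namely that the first and third assertions both rest on continuity of $L_A$ obtained via the closed graph theorem in the Banach (respectively Fr\'echet/FK) setting.
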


\begin{proposition}\label{P4} (cf. \cite{mal99}, Proposition 3.4) Let $T$ be a triangle\\
\begin{enumerate}
\item[(i)] If $X \text{  and  } Y$ are subsets of $w$, then $A\in (X,Y_T)$ if and only if $B=TA\in (X,Y)$.
\item[(ii)] If $X \text{  and  } Y$ are BK spaces and $A\in (X,Y_T)$, then 
\begin{align*}\label{operatorequality}
\|L_A\|=\|L_B\|
\end{align*} 
\end{enumerate} 
\end{proposition}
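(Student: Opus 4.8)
The plan is to reduce both parts to a single associativity identity: for a triangle $T$ the matrix product $B=TA$ satisfies $T(Ax)=Bx$ for every $x$ for which $Ax$ is defined. First I would fix $x\in X$ with each $A_n\in X^\beta$, so that $(Ax)_m=\sum_{k}a_{mk}x_k$ converges for every $m$. Since $T$ is a triangle, $(T(Ax))_n=\sum_{m=0}^{n}t_{nm}(Ax)_m$ is a \emph{finite} linear combination of convergent series, so interchanging the finite outer sum with the convergent inner series is legitimate and gives
\begin{align*}
(T(Ax))_n=\sum_{m=0}^{n}t_{nm}\sum_{k=0}^{\infty}a_{mk}x_k=\sum_{k=0}^{\infty}\left(\sum_{m=0}^{n}t_{nm}a_{mk}\right)x_k=\sum_{k=0}^{\infty}b_{nk}x_k=(Bx)_n.
\end{align*}
This rearrangement is the computational heart of the proposition, and it is precisely the triangularity of $T$ (finitely many nonzero entries in each row) that makes it unproblematic.

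For part (i) I would then unwind the definitions. The assertion $A\in(X,Y_T)$ means $A_n\in X^\beta$ for every $n$ and $Ax\in Y_T$ for all $x\in X$; by the definition of the matrix domain the latter says $T(Ax)\in Y$, which by the identity above is exactly $Bx\in Y$. For the $\beta$-dual bookkeeping, note that $B_n=\sum_{m=0}^{n}t_{nm}A_m$ is a finite linear combination of the $A_m$ and $X^\beta$ is a linear subspace of $w$, so $B_n\in X^\beta$ whenever all $A_m\in X^\beta$; conversely, writing $S=T^{-1}$ (a triangle, by Proposition \ref{P1}) gives $A=SB$ and hence $A_n=\sum_{m=0}^{n}s_{nm}B_m$, recovering $A_n\in X^\beta$ from the $B_m$. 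Combining these observations yields $A\in(X,Y_T)$ if and only if $B=TA\in(X,Y)$.

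For part (ii) the extra ingredient is the norm on the matrix domain: when $Y$ is a BK space and $T$ is a triangle, $Y_T$ is a BK space under $\|y\|_{Y_T}=\|Ty\|_Y$, exactly the construction already invoked in Theorem \ref{T1}. With this, for every $x\in X$ I have $\|L_A(x)\|_{Y_T}=\|Ax\|_{Y_T}=\|T(Ax)\|_Y=\|Bx\|_Y=\|L_B(x)\|_Y$, the middle equality being the associativity identity once more. Taking the supremum over $\|x\|_X=1$ then gives $\|L_A\|=\|L_B\|$ immediately. I expect the only genuine obstacle to be the careful justification of the interchange of summations in the displayed identity; once that is secured, both conclusions follow, with the remaining effort being routine verification of $\beta$-dual membership and of the definition of the domain norm.
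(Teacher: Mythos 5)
You are not competing with an internal argument here: the paper states this proposition as a known result, citing \cite{mal99} (Proposition 3.4), and supplies no proof of its own, so your reconstruction fills in what the paper delegates to the literature. That said, your proof is correct and is essentially the standard argument for this fact. The three load-bearing points are all present and properly justified: (1) the identity $T(Ax)=(TA)x$, valid whenever $Ax$ exists because each row of the triangle $T$ is finitely supported, so interchanging the finite outer sum with the convergent inner series is just linearity of limits; (2) the converse bookkeeping via $S=T^{-1}$, which is exactly the step a careless proof would skip --- existence of $Bx$ for $x\in X$ does not by itself give existence of $Ax$, and you correctly recover $A_n=\sum_{m=0}^{n}s_{nm}B_m\in X^\beta$ (using that products of row-finite matrices are associative, so $A=S(TA)=(ST)A$) before applying the identity once more to get $T(Ax)=Bx\in Y$ and hence $Ax\in Y_T$; (3) for part (ii), the canonical BK norm $\|y\|_{Y_T}=\|Ty\|_Y$ on the matrix domain (Wilansky's Theorem 4.3.12, the same fact the paper invokes in proving Theorem \ref{T1}), under which $\|L_A(x)\|_{Y_T}=\|L_B(x)\|_Y$ pointwise and the operator norms agree upon taking the supremum over the unit ball. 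The one caveat worth recording is that the equality in (ii) is tied to this particular choice of norm on $Y_T$: BK norms are unique only up to equivalence, so with a different but equivalent norm on $Y_T$ one would obtain two-sided estimates rather than equality. Since the canonical norm is the intended reading of the cited result (and is the norm the paper actually uses on its spaces), your proof establishes the right statement.
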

Using Proposition \ref{P3} and Theorem \ref{T3} we can easily conclude that following:
\begin{corollary}\label{C1}
Let $(q_k)_{k=0}^{\infty}$ be a positive sequence, $Q_n=\sum_{k=0}^{n} q_k$ and $\Delta^-$ be the difference operator as defined in \eqref{Dop}, then 
\begin{enumerate}
\item[i)] $A\in \left(\left(N_{\Delta^-}^{q} \right)_\infty, \ell_\infty\right)$  if and only if 
\begin{equation}\label{Co1i}
\sup_{m,n}\left[\sum_{k=0}^{m-1}Q_{k}\left|\left({1\over q_{k+1}}-{1\over q_k}\right) \sum_{j=k+1}^{m}a_{nj}\right|+
\left|{Q_ma_{nm}\over q_m}\right|\right]<\infty
\end{equation}
 and 
\begin{equation}\label{Co1ii}
{A_nQ\over q} \in c_0~~~\forall~n=0,1,\ldots
\end{equation}
\item[ii)] $A\in \left(\left(N_{\Delta^-}^{q} \right), \ell_\infty\right)$  if and only if condition \eqref{Co1i} holds and 
\begin{equation}\label{Co2}
{A_nQ\over q} \in c~~~~~~~\forall~n=0,1,2,\ldots
\end{equation}
\item[iii)] $A\in \left(\left(N_{\Delta^-}^{q} \right)_0, \ell_\infty\right)$  if and only if condition \eqref{Co1i} holds.
\item[iv)] $A\in \left(\left(N_{\Delta^-}^{q} \right)_0, c_0\right)$  if and only if condition \eqref{Co1i} holds and
\begin{equation}\label{Co3}
\lim_{n\rightarrow\infty} a_{nk}=0~~~~~\text{ for all  } k=0,1,2\ldots
\end{equation}
\item[v)] $A\in \left(\left(N_{\Delta^-}^{q} \right)_0, c\right)$  if and only if condition \eqref{Co1i} holds and
\begin{equation}\label{Co4}
\lim_{n\rightarrow\infty} a_{nk}=\alpha_k~~~~~\text{ for all  } k=0,1,2\ldots
\end{equation}
\item[vi)] $A\in \left(\left(N_{\Delta^-}^{q} \right), c_0\right)$  if and only if conditions \eqref{Co1i}, \eqref{Co1ii} and \eqref{Co3} holds and
\begin{equation}\label{Co5}
\lim_{n\rightarrow\infty}\sum_{k=0}^{\infty} a_{nk}=0~~~~~\text{ for all  } k=0,1,2\ldots
\end{equation}
\item[vii)] $A\in \left(\left(N_{\Delta^-}^{q} \right), c\right)$  if and only if conditions \eqref{Co1i}, \eqref{Co1ii} and \eqref{Co4} holds and
\begin{equation}\label{Co6}
\lim_{n\rightarrow\infty}\sum_{k=0}^{\infty} a_{nk}=\alpha~~~~~\text{ for all  } k=0,1,2\ldots
\end{equation}
\end{enumerate}
\end{corollary}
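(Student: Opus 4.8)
The proof rests on two tools already established: the norm formula of Theorem \ref{T3} and the two criteria contained in Proposition \ref{P3}. The plan is to treat the three $\ell_\infty$-targets (parts i--iii) first by a direct norm computation, and then to obtain the $c_0$- and $c$-targets (parts iv--vii) by restriction via the basis criterion.

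\emph{Step 1 (targets $\ell_\infty$).} By the first part of Proposition \ref{P3}, a matrix $A$ lies in $(X,\ell_\infty)$ precisely when $\|A\|^{*}=\sup_{n}\|A_{n}\|^{*}<\infty$, where each row $A_{n}=(a_{nk})_{k}$ must first belong to $X^{\beta}$. Substituting $a=A_{n}$ into the formula of Theorem \ref{T3} and taking the supremum over the row index $n$ converts $\sup_{n}\|A_{n}\|^{*}<\infty$ into exactly the double-indexed condition \eqref{Co1i}. What separates the three base spaces is the $\beta$-dual membership supplied by Theorem \ref{T2}. For $X=\left(\bar{N}_{\Delta^{-}}^{q}\right)_{0}$ (part iii) the dual is $c_{1}\cap c_{2}$, and for each fixed row this is already forced by \eqref{Co1i}, so \eqref{Co1i} alone characterises $A\in(X,\ell_\infty)$. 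For $X=\left(\bar{N}_{\Delta^{-}}^{q}\right)_{\infty}$ (part i) and $X=\left(\bar{N}_{\Delta^{-}}^{q}\right)$ (part ii) the duals $c_{2}\cap c_{3}$ and $c_{1}\cap c_{2}\cap c_{4}$ contribute the genuinely new terminal conditions, namely that $A_{n}Q/q\in c_{0}$ in \eqref{Co1ii} and $A_{n}Q/q\in c$ in \eqref{Co2}.

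\emph{Step 2 (targets $c_0$ and $c$).} Here I invoke the basis part of Proposition \ref{P3}: since $c_{0}$ and $c$ are closed subspaces of the FK space $\ell_\infty$, a matrix $A$ maps $X$ into $c_{0}$ (resp. $c$) if and only if $A\in(X,\ell_\infty)$ and $A$ sends each basis vector of $X$ into $c_{0}$ (resp. $c$). For $X=\left(\bar{N}_{\Delta^{-}}^{q}\right)_{0}$, which has AK by Theorem \ref{T1} (assuming $Q_{n}\to\infty$), testing $A$ on the canonical basis reduces $A b^{(k)}$ to the $k$-th column $(a_{nk})_{n}$; requiring this column to lie in $c_{0}$ yields \eqref{Co3}, and in $c$ yields \eqref{Co4}, alongside \eqref{Co1i}, which proves parts iv) and v). For $X=\left(\bar{N}_{\Delta^{-}}^{q}\right)$ the basis of Theorem \ref{T} carries the extra element corresponding to $e$; applying $A$ to it produces the row-sum sequence $\left(\sum_{k}a_{nk}\right)_{n}$, whose convergence to $0$ gives \eqref{Co5} and to $\alpha$ gives \eqref{Co6}. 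Combining these with the $\ell_\infty$-conditions from part ii), that is \eqref{Co1i} and \eqref{Co1ii}, and with the columnwise conditions completes parts vi) and vii).

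The delicate point will be the bookkeeping between the $\beta$-dual conditions of Theorem \ref{T2} and the boundedness condition \eqref{Co1i}: one must verify that for the $c_{0}$-based space the dual membership adds nothing beyond \eqref{Co1i}, while for the $c$- and $\ell_\infty$-based spaces it precisely supplies \eqref{Co1ii} and \eqref{Co2}. One must also take care that the norm formula of Theorem \ref{T3}, stated for a single sequence, is applied rowwise with the correct $\sup_{m,n}$, and that in Step 2 the basis elements are the images $S\!\left(e^{(k)}\right)$ of Proposition \ref{P2} written back in the original coordinates, so that each test $A b^{(k)}$ collapses to a single column or, for the $c$-space, to the full row sum.
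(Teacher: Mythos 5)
Your proposal is correct and takes essentially the same route as the paper, which ``proves'' this corollary in a single line by appealing to Proposition \ref{P3} and Theorem \ref{T3}: your Step 1 is exactly the rowwise application of the norm formula of Theorem \ref{T3} combined with the criterion $\|A\|^{*}=\sup_{n}\|A_{n}\|^{*}<\infty$, and your Step 2 is the closed-subspace/basis test from Proposition \ref{P3} applied with the AK basis guaranteed by Theorem \ref{T1}, which is precisely what the paper intends. The ``delicate point'' you flag --- whether \eqref{Co1i} alone really forces each row into the $\beta$-dual of the $c_0$-based space, rather than only giving the $c_{1}$-part of Theorem \ref{T2} --- is left equally unaddressed by the paper itself, so your write-up is, if anything, more explicit than the original.
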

Again Using Proposition \ref{P4} and Theorem \ref{T1} we have the following corollary:
 
\section{Hausdorff Measure of Noncompactness}
From Mursaleen et. al. \cite{mur11} we let $S$ and $M$ be the subsets of a metric space $(X,d)$ and $\epsilon>0$. Then $S$ is called an $\epsilon-$net of $M$ in $X$ if for every $x\in M$ there exists $s\in S$ such that $d(x,s)<\epsilon$. Further, if the set $S$ is finite, then the $\epsilon-$net $S$ of $M$ is called {\it finite $\epsilon-$net} of $M$. A subset of a metric space is said to be {\it totally bounded} if it has a finite $\epsilon-$net for every $\epsilon>0.$\\
If $\mathcal{M}_X$ denotes the collection of all bounded subsets of metric space $(X,d)$. If $Q\in \mathcal{M}_X$ then the \emph{Hausdorff Measure of Noncompactness} of the set $Q$ is defined by 
$$\chi(Q)=\inf \left\{\epsilon>0: Q \text{  has a finite } \epsilon-\text{net in  }X \right\} $$
 The function $\chi:\mathcal{M}_X\rightarrow [0,\infty)$ is called \emph{Hausdorff Measure of Noncompactness} \cite{Josaf}\\
The basic properties of \emph{Hausdorff Measure of Noncompactness} can be found in (\cite{M1-b}, \cite{malkowsky1}, \cite{Josaf}). Some of those properties are \\
If $Q,Q_1$ and $Q_2$ are bounded subsets of a metric space $(X,d),$ then 
\begin{align*}
\chi(Q)&=0 \Leftrightarrow Q\text{ is totally bpunded set,}\\
\chi(Q)&=\chi(\bar{Q}),\\
Q_1\subset Q_2 &\Rightarrow \chi(Q_1)\leq \chi(Q_2),\\
\chi(Q_1\cup Q_2)&=\max\left\{ \chi(Q_1),\chi(Q_2)\right\},\\
\chi(Q_1\cap Q_2)&=\min\left\{ \chi(Q_1),\chi(Q_2)\right\}.
\end{align*}
Further if $X $ is a normed space the $\chi$ has the additional properties connected with the linear structure. \\
\begin{align*}
\chi(Q_1+ Q_2)&\leq \chi(Q_1)+\chi(Q_2)\\
\chi(\eta Q)&=|\eta|\chi(Q)~~~~~~~~~~~~~~~~\eta \in \mathbb{C}
\end{align*}
If $X$ and $Y$ are normed space, then for $A\in \mathcal{B}(X,Y)$ the Hausdorff Measure of Noncompactness of $A$, is denoted by $\|A\|_\chi$ and is defined as  
$$\|A\|_\chi=\chi(AB)$$
Where $B=\{x\in X:\|x\|=1\}$ is the unit ball in $X$. \\
Also $A $ is said to be compact if and only if  $\|A\|_\chi=0$ and $\|A\|_\chi\leq \|A\|$.
\begin{proposition} \label{P11} (\cite{Josaf}, Theorem 6.1.1, $X=c_0$)
Let $Q\in M_{c_0}$ and $P_r:c_0\rightarrow c_0 ~~(r\in \mathbb{N}$ be the operator defined by  $P_r(x)=(x_0,x_1,\ldots, x_r, 0,0,\ldots)$ for all $x=(x_k)\in c_0$. Then, we have 
$$\chi(Q)=\lim_{r\rightarrow \infty}\left(\sup_{x\in Q}\|(I-P_r)(x)\| \right)$$
where $I$ is the identity operator on $c_0$. 
\end{proposition}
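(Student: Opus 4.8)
The plan is to prove the two inequalities $\chi(Q)\le\mu$ and $\chi(Q)\ge\mu$, where I set $\mu:=\lim_{r\to\infty}\big(\sup_{x\in Q}\|(I-P_r)x\|\big)$. First I would check that this limit actually exists: for a fixed $x\in c_0$ one has $\|(I-P_r)x\|=\sup_{k>r}|x_k|$, which is non-increasing in $r$, so the quantity $d_r:=\sup_{x\in Q}\|(I-P_r)x\|$ is a non-increasing, non-negative sequence and therefore converges to $\mu=\inf_r d_r$. Boundedness of $Q$ guarantees each $d_r$ is finite, so the right-hand side of the claimed identity is well defined.

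For the inequality $\chi(Q)\le\mu$, I would fix $\epsilon>0$ and choose $r$ with $d_r<\mu+\epsilon$. The image $P_r(Q)$ lies in the finite-dimensional subspace of sequences supported on $\{0,1,\ldots,r\}$ and is bounded (since $\|P_r\|\le1$ and $Q$ is bounded), hence totally bounded; let $S\subset c_0$ be a finite $\epsilon$-net of $P_r(Q)$. For any $x\in Q$ pick $s\in S$ with $\|P_r x-s\|<\epsilon$; then $\|x-s\|\le\|(I-P_r)x\|+\|P_r x-s\|<d_r+\epsilon<\mu+2\epsilon$, so $S$ is a finite $(\mu+2\epsilon)$-net of $Q$. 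Thus $\chi(Q)\le\mu+2\epsilon$, and letting $\epsilon\to0$ gives $\chi(Q)\le\mu$.

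For the reverse inequality I would use that the tail projection satisfies $\|I-P_r\|=1$ on $c_0$. Given $\epsilon>0$, choose a finite $(\chi(Q)+\epsilon)$-net $S$ of $Q$. For $x\in Q$ select $s\in S$ with $\|x-s\|<\chi(Q)+\epsilon$ and estimate $\|(I-P_r)x\|\le\|I-P_r\|\,\|x-s\|+\|(I-P_r)s\|<\chi(Q)+\epsilon+\|(I-P_r)s\|$. Because $S$ is finite and every $s\in S$ lies in $c_0$, the quantity $\max_{s\in S}\|(I-P_r)s\|=\max_{s\in S}\sup_{k>r}|s_k|$ tends to $0$ as $r\to\infty$; choosing $r$ large makes it $<\epsilon$, whence $d_r<\chi(Q)+2\epsilon$ and so $\mu\le\chi(Q)+2\epsilon$. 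Letting $\epsilon\to0$ yields $\mu\le\chi(Q)$, and combining the two estimates finishes the proof.

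The finite-dimensional total-boundedness argument in the upper bound is routine; the step that genuinely needs the structure of $c_0$ is the lower bound, where the constant $1$ in the formula comes precisely from $\|I-P_r\|=1$ (the canonical basis of $c_0$ being monotone). The one subtlety I would state carefully is that for a \emph{finite} net $S$ the convergence $\|(I-P_r)s\|\to0$ is \emph{uniform} over $s\in S$, which is exactly what permits interchanging the net with the tail limit; this uniformity would fail for an infinite net and is the real crux of the lower estimate.
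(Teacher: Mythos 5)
Your proof is correct, but there is nothing in the paper to compare it against: the paper does not prove this proposition at all, it simply quotes it as a known result (Theorem 6.1.1 of the cited monograph, specialized to $X=c_0$). What you have supplied is, in effect, the missing proof, and it is the standard Goldenstein--Gohberg--Markus argument from that literature. Your three steps are all sound: the limit exists because $r\mapsto\sup_{k>r}|x_k|$ is non-increasing for each fixed $x$, hence so is $d_r$, with finiteness coming from boundedness of $Q$; the upper bound $\chi(Q)\le\mu$ follows correctly from total boundedness of the bounded finite-dimensional set $P_r(Q)$ and the splitting $x=P_r x+(P_r x - s)$-style estimate $\|x-s\|\le\|(I-P_r)x\|+\|P_rx-s\|$; and the lower bound $\mu\le\chi(Q)$ correctly combines $\|I-P_r\|=1$ with the uniform vanishing of $\max_{s\in S}\|(I-P_r)s\|$ over the finitely many net points. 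Your closing remark is also the right diagnosis of where the structure of $c_0$ enters: it is exactly because $\|I-P_r\|=1$ that one gets an identity here, rather than the two-sided estimate with the constant $1/a$ that appears in Proposition \ref{P12} for a general Schauder basis, and it is the finiteness of the net that turns pointwise tail decay into uniform tail decay in the lower bound.
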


\begin{proposition} \label{P12} (\cite{Josaf}, Theorem 6.1.1)
Let $X$ be a Banach space with a Schauder basis $\{e_1,e_2,\ldots \}$, and $Q\in M_{X}$ and $P_n:X\rightarrow X ~~(n\in \mathbb{N}$ be the projector onto the linear span of  $\{e_1,e_2,\ldots , e_n\}$. Then, we have 
$${1\over a}\lim_{n\rightarrow \infty}\sup \left(\sup_{x\in Q}\|(I-P_n)(x)\| \right) \leq \chi(Q)\leq \inf_{n}\left(\sup_{x\in Q}\|(I-P_n)(x)\| \right)\leq 
\lim_{n\rightarrow \infty}\sup \left(\sup_{x\in Q}\|(I-P_n)(x)\| \right)$$ 
where  $a=\lim_{n\rightarrow \infty}\sup \|I-P_n\|.$
If $X=c$ then $a=2$. (see \cite{Josaf}, p.22).
\end{proposition}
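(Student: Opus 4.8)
The plan is to prove the four-term chain by treating its three inequalities separately, writing $r_n = \sup_{x\in Q}\|(I-P_n)(x)\|$ throughout. The rightmost inequality $\inf_n r_n \le \limsup_n r_n$ is immediate, since the infimum of a sequence never exceeds its limit superior. Before the two substantive inequalities I would first record that $a = \limsup_n \|I-P_n\|$ is finite: because $\{e_k\}$ is a Schauder basis, $P_n x \to x$ for every $x\in X$, so each orbit $(P_n x)_n$ is bounded, and the Banach--Steinhaus theorem gives $\sup_n\|P_n\| < \infty$, whence $\sup_n\|I-P_n\| \le 1 + \sup_n\|P_n\| < \infty$. This also guarantees that the $1/a$ factor is meaningful.

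For the middle inequality $\chi(Q)\le\inf_n r_n$ I would fix $n$ and exploit that $P_n(Q)$ lies in the finite-dimensional subspace $\operatorname{span}\{e_1,\dots,e_n\}$. Since $Q$ is bounded and $P_n$ is a bounded operator, $P_n(Q)$ is a bounded subset of a finite-dimensional space, hence totally bounded, so $\chi(P_n(Q)) = 0$. Writing $x = P_n(x) + (I-P_n)(x)$ shows $Q \subseteq P_n(Q) + r_n \bar B$, where $\bar B$ is the closed unit ball. The monotonicity, subadditivity and positive homogeneity of $\chi$ recorded in the excerpt, together with $\chi(\bar B)\le 1$ (the singleton $\{0\}$ is an $\varepsilon$-net of $\bar B$ for every $\varepsilon>1$), then give $\chi(Q)\le \chi(P_n(Q)) + r_n\chi(\bar B)\le r_n$. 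Taking the infimum over $n$ yields $\chi(Q)\le\inf_n r_n$.

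The leftmost inequality is the crux. Fix any $\varepsilon > \chi(Q)$; by definition of $\chi$ there is a finite $\varepsilon$-net $\{y_1,\dots,y_m\}\subseteq X$ of $Q$. For each $x\in Q$ choose $y_i$ with $\|x-y_i\|<\varepsilon$, and split $(I-P_n)(x) = (I-P_n)(x-y_i) + (I-P_n)(y_i)$ by linearity to obtain $\|(I-P_n)(x)\| \le \varepsilon\,\|I-P_n\| + \max_{1\le j\le m}\|(I-P_n)(y_j)\|$. Taking the supremum over $x\in Q$ gives $r_n \le \varepsilon\,\|I-P_n\| + \max_j\|(I-P_n)(y_j)\|$. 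Here the two ingredients from the setup do the work: for each fixed $j$ the Schauder property gives $P_n(y_j)\to y_j$, so $\|(I-P_n)(y_j)\|\to 0$, and since the maximum runs over only finitely many indices it too tends to $0$; meanwhile $\limsup_n\|I-P_n\| = a$. Passing to $\limsup_n$ therefore yields $\limsup_n r_n \le a\varepsilon$, and letting $\varepsilon\downarrow\chi(Q)$ gives $\tfrac1a\limsup_n r_n \le \chi(Q)$. I expect this paragraph to be the main obstacle, precisely because it forces one to use simultaneously the uniform bound on $\|I-P_n\|$ and the pointwise convergence on the net: it is the finiteness of the $\varepsilon$-net that lets the error term vanish in the limit.

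Finally, for $X=c$ I would identify the natural Schauder basis and compute $\|I-P_n\|$ directly. With $l=\lim_k x_k$, the remainder takes the form $(I-P_n)(x) = (0,\dots,0,x_{n+1}-l,x_{n+2}-l,\dots)$, so $\|(I-P_n)(x)\|_\infty = \sup_{k>n}|x_k-l|$. Since $|x_k-l|\le 2$ whenever $\|x\|_\infty\le 1$, while this bound is approached by sequences with $l=-1$ and one late coordinate equal to $1$, I obtain $\|I-P_n\|=2$ for every $n$, and hence $a=2$.
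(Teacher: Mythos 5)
Your proposal is correct, but there is nothing in the paper to compare it against: the paper does not prove this proposition at all, it simply quotes it from \cite{Josaf} (Theorem 6.1.1; this is the classical Goldenstein--Gohberg--Markus result). What you have written is essentially the standard textbook proof of that cited theorem, and it is sound at every step: the decomposition $Q\subseteq P_n(Q)+r_n\bar{B}$, combined with total boundedness of bounded sets in the finite-dimensional range of $P_n$ and the monotonicity, subadditivity and homogeneity of $\chi$ listed in the paper, gives $\chi(Q)\leq\inf_n r_n$; the finite $\varepsilon$-net argument, splitting $(I-P_n)(x)=(I-P_n)(x-y_i)+(I-P_n)(y_i)$ and playing the uniform bound $\sup_n\|I-P_n\|<\infty$ (Banach--Steinhaus) against pointwise convergence $P_n y_j\to y_j$ on the finitely many net points, gives the left inequality after letting $\varepsilon\downarrow\chi(Q)$; and your computation in $c$ (upper bound $|x_k-l|\leq 2\|x\|_\infty$, lower bound from sequences with limit $-1$ and one late coordinate equal to $1$) correctly yields $\|I-P_n\|=2$ for all $n$, hence $a=2$. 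The only loose end is minor: dividing by $a$ requires $a>0$, not just $a<\infty$, which you did not remark; it holds because each $I-P_n$ is a nonzero bounded idempotent (nonzero since $X$ is infinite dimensional), so $\|I-P_n\|\geq 1$ and hence $a\geq 1$. Since the paper leans on this proposition as a black box to prove its Theorem \ref{T4}, your write-up actually supplies a proof the paper omits rather than an alternative to one it contains.
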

   
\section{Compact operators on the spaces $\left(N_{\Delta^-}^{q} \right)_0$, $\left(N_{\Delta^-}^{q} \right)$ and $\left(N_{\Delta^-}^{q} \right)_\infty$}
\begin{theorem}\label{T4}
Consider the matrix $A$ as in Corollary \ref{C1}, and for any integers n,s, $n>s$ set 
\begin{equation}\label{E1}
\|A\|^{(s)}=\sup_{n>p}\sup_{m}\left(\sum_{j=0}^{m-1}Q_j\left|\left({1\over q_{j+1}}-{1\over q_j}\right)\sum_{i=j+1}^{m}a_{ni} \right|+\left|{Q_ma_{nm}\over q_m} \right|\right)
\end{equation} 
If $X$ be either $\left(N_{\Delta^-}^{q} \right)_0$ or $\left(N_{\Delta^-}^{q} \right)$ and $A\in (X,c_0) $. Then 
\begin{equation}\label{E2}
\|L_A\|_{\chi}=\lim_{s \rightarrow\infty} \|A\|^{(s)}.
\end{equation} 
If $X$ be either $\left(N_{\Delta^-}^{q} \right)_0$ or $\left(N_{\Delta^-}^{q} \right)$ and $A\in (X,c) $. Then 
\begin{equation}\label{E3}
{1\over 2}\cdot \lim_{s\rightarrow\infty} \|A\|^{(s)}\leq \|L_A\|_{\chi}\leq \lim_{r\rightarrow\infty} \|A\|^{(s)}.
\end{equation}
and if $X$ be either $\left(N_{\Delta^-}^{q} \right)_0$ , $\left(N_{\Delta^-}^{q} \right)$ or $\left(N_{\Delta^-}^{q} \right)_\infty$ and $A\in (X,\ell_\infty) $. Then 
\begin{equation}
0\leq \|L_A\|_{\chi}\leq \lim_{s\rightarrow\infty} \|A\|^{(s)}.
\end{equation}
\end{theorem}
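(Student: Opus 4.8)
The plan is to reduce everything to the identity $\|L_A\|_\chi=\chi(AB)$, where $B=\{x:\|x\|_{\bar N_{\Delta^-}^q}=1\}$ is the unit sphere of $X$, and then to estimate $\chi(AB)$ through the tail behaviour of the rows of $A$. The arithmetic backbone is the observation that, for each fixed $n$, the expression appearing inside $\|A\|^{(s)}$ is exactly the dual norm $\|A_n\|^{*}$ computed in Theorem \ref{T3}; consequently
$$\sup_{x\in B}\sup_{n>s}|A_n(x)|=\sup_{n>s}\sup_{\|x\|=1}|A_n(x)|=\sup_{n>s}\|A_n\|^{*}=\|A\|^{(s)},$$
which links the abstract measure-theoretic quantities to the explicit formula (and makes clear that the ``$n>p$'' in \eqref{E1} should read $n>s$). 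I would also record at the outset that $s\mapsto\|A\|^{(s)}$ is nonincreasing, so that $\lim_{s}\|A\|^{(s)}=\inf_{s}\|A\|^{(s)}$ exists.

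For the upper bound, which holds for all three target spaces simultaneously, I would split $L_A=H_s+T_s$, where $H_s$ is the operator determined by the first $s+1$ rows of $A$ (zero rows thereafter) and $T_s$ by the remaining rows. Since $H_s$ has range inside the $(s+1)$-dimensional coordinate subspace it is of finite rank, hence compact, so $\|H_s\|_\chi=0$. Subadditivity of $\chi$ together with $\|\cdot\|_\chi\le\|\cdot\|$ and Proposition \ref{P3} then give $\|L_A\|_\chi\le\|T_s\|=\sup_{n>s}\|A_n\|^{*}=\|A\|^{(s)}$ for every $s$, and letting $s\to\infty$ yields $\|L_A\|_\chi\le\lim_{s}\|A\|^{(s)}$. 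This already settles the $\ell_\infty$ assertion, whose lower bound is the trivial $0$.

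For the $c_0$ target I would apply Proposition \ref{P11} to $Q=AB$: because the truncation $I-P_r$ acts coordinatewise, $\|(I-P_r)(Ax)\|_\infty=\sup_{n>r}|A_n(x)|$, and taking the supremum over $x\in B$ and passing to the limit gives $\chi(AB)=\lim_{r}\sup_{n>r}\|A_n\|^{*}=\lim_{s}\|A\|^{(s)}$, matching the upper bound and proving \eqref{E2}. For the $c$ target I would instead invoke Proposition \ref{P12} with $a=2$, obtaining $\tfrac12\limsup_{r}\sup_{x\in B}\|(I-P_r)(Ax)\|\le\chi(AB)\le\limsup_{r}\sup_{x\in B}\|(I-P_r)(Ax)\|$; combined with the upper bound this should produce the two-sided estimate \eqref{E3}.

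The main obstacle is precisely the $c$ case, and it is the point where $c$ departs from $c_0$: the Schauder projection $P_r$ of $c$ is taken relative to the basis $\{e,e^{(k)}\}$, so $(I-P_r)(Ax)$ retains the limit term, namely $\|(I-P_r)(Ax)\|_\infty=\sup_{k\ge r}|(Ax)_k-\ell(x)|$ with $\ell(x)=\lim_{k}(Ax)_k$. Writing $\ell(x)=\sum_{j}\alpha_j x_j$ with $\alpha_j=\lim_{n}a_{nj}$ (these exist since $A\in(X,c)$ by Corollary \ref{C1}), the tail supremum equals $\sup_{k\ge r}\|A_k-\alpha\|^{*}$ rather than $\sup_{k\ge r}\|A_k\|^{*}=\|A\|^{(r)}$. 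I would therefore have to control the limit sequence $\alpha$ and argue that, within the factor $2$ already supplied by $\|I-P_r\|_c\to2$, its presence does not alter the estimate stated in terms of $\|A\|^{(s)}$; carrying out this identification and tracking that constant is the delicate step, whereas the $c_0$ and $\ell_\infty$ parts follow immediately from the splitting above.
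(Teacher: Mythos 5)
Your treatment of the $c_0$ and $\ell_\infty$ cases is correct and is essentially the paper's own argument: your splitting $L_A=H_s+T_s$ into a finite-rank part plus a tail is just the operator form of the paper's set inclusion $AF\subset P_s(AF)+(I-P_s)(AF)$ (with $H_s=P_sL_A$ and $T_s=(I-P_s)L_A$), and your identity $\sup_{x\in B}\sup_{n>s}|A_n(x)|=\sup_{n>s}\|A_n\|^{*}=\|A\|^{(s)}$ is exactly the paper's equation \eqref{E5}, obtained from Theorem \ref{T3} and Proposition \ref{P3}; your correction of ``$n>p$'' to ``$n>s$'' in \eqref{E1} is also right.

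The genuine gap is the $c$ case, which you locate precisely but leave open --- and it cannot be closed in the stated form, because the lower bound in \eqref{E3} is false. As you observe, for the projections $P_s$ of $c$ associated with the basis $\{e,e^{(k)}\}$ one has $\|(I-P_s)(Ax)\|_\infty=\sup_{n>s}|A_n(x)-\ell(x)|$, so Proposition \ref{P12} yields a two-sided bound in terms of $\limsup_{s}\sup_{n>s}\|A_n-\alpha\|^{*}$, where $\alpha=(\alpha_k)$ with $\alpha_k=\lim_{n}a_{nk}$ (the standard Malkowsky--Rako\v{c}evi\'c form); this coincides with $\lim_{s}\|A\|^{(s)}$ only when the limit functional vanishes, i.e.\ essentially only in the $c_0$ case. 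That the discrepancy is fatal is shown by the paper's own closing example: $a_{nk}=\delta_{k1}$ for all $n$ with $q_n=3^n$ satisfies the conditions of Corollary \ref{C1} for mapping into $c$ (indeed $Ax=(x_1,x_1,\ldots)$ is constant), so $A\in(X,c)$, $L_A$ has rank one, hence $\|L_A\|_\chi=0$, while $\lim_{s}\|A\|^{(s)}>0$; this contradicts ${1\over 2}\lim_{s}\|A\|^{(s)}\leq\|L_A\|_\chi$. So your hesitation is well founded: the ``delicate step'' is not absorbable into the factor $2$, and in fact the paper's own proof commits precisely the error you stopped short of, by citing \eqref{E5} --- proved for the coordinate projections on $c_0$ --- as if it were valid for the $c$-projections, where it fails. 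A correct statement must replace $\|A\|^{(s)}$ by $\sup_{n>s}\|A_n-\alpha\|^{*}$ in \eqref{E3}; with that replacement, your outline (Proposition \ref{P12} plus the dual-norm identity applied to the rows $A_n-\alpha$) does go through.
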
 
\begin{proof}
Let $F=\{x\in X:\|x\|\leq 1\}$ if $A\in (X,c_0) $ and $X$ is one of the spaces $\left(N_{\Delta^-}^{q} \right)_0$ or $\left(N_{\Delta^-}^{q} \right)$, then by Proposition \ref{P11} 
\begin{equation}\label{E4}
\|L_A\|_{\chi}=\chi(AF)=\lim_{s \rightarrow\infty} \left[\sup_{x\in F}\|(I-P_s)Ax\|\right]
\end{equation}
Again using Proposition \ref{P3} and Corollary \ref{C1} we have 
\begin{equation}\label{E5}
\|A\|^{s}=\sup_{x\in F}\|(I-P_s)Ax\|
\end{equation}
From \eqref{E4} and \eqref{E5}  we get 
\begin{align*}
\|L_A\|_{\chi}=\lim_{s \rightarrow\infty} \|A\|^{(s)}.
\end{align*}
Since every sequence $x=(x_k)_{k=0}^{\infty} \in c$ has a unique representation 
$$x=le+\sum_{k=0}^{\infty} (x_k-l)e^{(k)}~~~~~~~~~~~\text{where} ~~l\in \mathbb{C}~~\text{is such that } x-le\in c_0$$
We define $P_s:c\rightarrow c$ by  $P_s(x)=le+\sum_{k=0}^{s} (x_k-l)e^{(k)}$, $s=0,1,2,\ldots$.\\
Then $\|I-P_s\|=2$ and using \eqref{E5}  and Proposition \ref{P12} we get 
\begin{align*}
{1\over 2}\cdot \lim_{s\rightarrow\infty} \|A\|^{(s)}\leq \|L_A\|_{\chi}\leq \lim_{s\rightarrow\infty} \|A\|^{(s)}
\end{align*}
Finally we define $P_s:\ell_\infty\rightarrow \ell_\infty$ by $P_s(x)=(x_0,x_1,\ldots, x_s, 0,0\ldots)$, $x=(x_k)\in \ell_\infty$.\\
Clearly $ AF\subset P_s(AF)+(I-P_s)(AF)$\\
So using the properties of $\chi$ we get 
\begin{align*}
\chi(AF)&\leq  \chi[P_s(AF)]+\chi[(I-P_s)(AF)]\\
&=\chi[(I-P_s)(AF)] \\
&\leq \sup_{x\in F}\|(I-P_s)A(x)\|
\end{align*}
Hence by Proposition \ref{P3} and and Corollary \ref{C1} we get
\begin{align*}
0\leq \|L_A\|_{\chi}\leq \lim_{s\rightarrow\infty} \|A\|^{(s)}
\end{align*}
\end{proof}

A direct corollary  of the above theorem is
\begin{corollary}
Consider the matrix $A$ as in Corollary \ref{C1}, and $X=\left(N_{\Delta^-}^{q} \right)_0$ or $X=\left(N_{\Delta^-}^{q} \right)$  
then if $A\in (X,c_0)$ or $A\in (X,c)$ we have 
\begin{align*}
L_A \text{  is compact if and only if } \lim_{s\rightarrow \infty}\|A\|^{(s)}=0 
\end{align*}
 Further, for $X=\left(N_{\Delta^-}^{q} \right)_0$ , $X=\left(N_{\Delta^-}^{q} \right)$ or $X=\left(N_{\Delta^-}^{q} \right)_\infty$, if $A\in (X,\ell_\infty)$ then we have   
\begin{align}\label{C111}
L_A \text{  is compact if } \lim_{s\rightarrow \infty}\|A\|^{(s)}=0 
\end{align}
\end{corollary}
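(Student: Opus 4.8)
The plan is to obtain the corollary as a direct consequence of Theorem~\ref{T4}, using the basic fact recorded earlier that an operator $L_A\in\mathcal{B}(X,Y)$ is compact if and only if its Hausdorff measure of noncompactness vanishes, i.e. $\|L_A\|_\chi=0$. In each of the three settings the task therefore reduces to deciding when the estimate supplied by Theorem~\ref{T4} forces $\|L_A\|_\chi$ to equal zero. No new computation is needed beyond substituting the conclusions of Theorem~\ref{T4} into this criterion; the matrix-entry expressions defining $\|A\|^{(s)}$ via \eqref{E1} are never reopened.

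First I would treat $X=\left(N_{\Delta^-}^{q}\right)_0$ or $X=\left(N_{\Delta^-}^{q}\right)$ with $A\in(X,c_0)$. Here Theorem~\ref{T4} yields the exact identity $\|L_A\|_\chi=\lim_{s\to\infty}\|A\|^{(s)}$, so $\|L_A\|_\chi=0$ holds precisely when $\lim_{s\to\infty}\|A\|^{(s)}=0$; combining with the compactness criterion gives the stated equivalence immediately.

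Next, for the same $X$ but $A\in(X,c)$, Theorem~\ref{T4} provides only the two-sided estimate $\tfrac12\lim_{s\to\infty}\|A\|^{(s)}\le\|L_A\|_\chi\le\lim_{s\to\infty}\|A\|^{(s)}$. The observation to make is that both outer bounds vanish simultaneously with $\lim_{s\to\infty}\|A\|^{(s)}$: if the limit is zero the upper bound forces $\|L_A\|_\chi=0$, while conversely $\|L_A\|_\chi=0$ together with the lower bound $\tfrac12\lim_{s\to\infty}\|A\|^{(s)}\le\|L_A\|_\chi$ and nonnegativity forces $\lim_{s\to\infty}\|A\|^{(s)}=0$. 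Thus the ``if and only if'' survives despite the loss of exact equality, and $L_A$ is compact exactly when $\lim_{s\to\infty}\|A\|^{(s)}=0$.

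Finally, for $X$ any of the three spaces with $A\in(X,\ell_\infty)$, only the one-sided bound $0\le\|L_A\|_\chi\le\lim_{s\to\infty}\|A\|^{(s)}$ is available, because $\ell_\infty$ carries no Schauder basis and only the subadditivity argument of Theorem~\ref{T4} applies in that case. Consequently I can conclude sufficiency alone: $\lim_{s\to\infty}\|A\|^{(s)}=0$ implies $\|L_A\|_\chi=0$ and hence compactness of $L_A$, which is precisely the weaker ``if'' statement \eqref{C111}. The only genuine subtlety in the whole argument is this contrast between the $c$ case, where a two-sided estimate still yields a characterization, and the $\ell_\infty$ case, where the missing lower bound is exactly what prevents one from asserting necessity; everything else is a mechanical reading-off from Theorem~\ref{T4}.
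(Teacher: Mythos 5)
Your proposal is correct and matches the paper's intent exactly: the paper states this result as a direct corollary of Theorem \ref{T4}, and the implicit argument is precisely yours, namely combining the equality \eqref{E2}, the two-sided estimate \eqref{E3}, and the one-sided bound for the $\ell_\infty$ case with the criterion that $L_A$ is compact if and only if $\|L_A\|_\chi=0$. Your explicit handling of the factor $\tfrac12$ in the $c$ case and the explanation of why only sufficiency survives for $\ell_\infty$ is exactly the reading-off the paper leaves to the reader.
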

In \eqref{C111} it is possible for $L_A$ to be compact although $\lim_{s\rightarrow \infty}\|A\|^{(s)}\not=0$, that is the condition is only sufficient condition for $L_A$ to be compact.\\
For example, let the matrix $A$ be defined as $A_n=e^{(1)}~~~n=0,1,2,\ldots$ and $q^n=3^n~,~n=0,1,2,\ldots$ . \\
Then by \eqref{Co1i} we have 
$$\sup_{m,n}\left[\sum_{k=0}^{m-1}Q_{k}\left|\left({1\over q_{k+1}}-{1\over q_k}\right) \sum_{j=k+1}^{m}a_{nj}\right|+
\left|{Q_ma_{nm}\over q_m}\right|\right]=\sup_{n}\left( {2\over3}+{1\over 2}(1-3^{-n})\right)<2$$
Now by Corollary \ref{C1} we know $A\in \left( \left(N_{\Delta^-}^{q} \right)_\infty,\ell_\infty\right)$ . \\
But 
$$\|A\|^{(s)}=\sup_{n>s}\left[ {2\over3}+{1\over 2}(1-3^{-n}) \right]={7\over 6}-{1\over 2\cdot 3^{r+1}}~~~\forall ~r$$
Which gives $\|A\|^{(s)} ={7\over 6}\not=0$.\\
Since $A(x)=x_1$ for all $x\in \left(N_{\Delta^-}^{q} \right)_\infty$, so $L_A$ is  compact operator.

\parindent=5mm\vspace{0.00in} 

\bibliographystyle{unsrt}
\bibliography{References}

{\small

\noindent{\bf Tanweer Jalal}

\noindent Department of Mathematics

\noindent Associate Professor of Mathematics

\noindent National Institute of Technology, Srinagar 

\noindent Srinagar, India

\noindent E-mail: tjalal@nitsri.net}\\

{\small
\noindent{\bf  Ishfaq Ahmad Malik}

\noindent  Department of Mathematics

\noindent Research Scholar

\noindent National Institute of Technology, Srinagar

\noindent Srinagar, India

\noindent E-mail: ishfaq$\_$2phd15@nitsri.net}\\

\end{document}